\newtheorem{theorem}{Theorem}[section]
\newtheorem{lemma}[theorem]{Lemma}
\newtheorem{corollary}[theorem]{Corollary}
\newtheorem{proposition}[theorem]{Proposition}
\newtheorem{example}[theorem]{Example}
\DeclareMathOperator{\crank}{crank}
\DeclareMathOperator{\mex}{mex}
\begin{document}

\title{Combinatorial Perspectives on the Crank and Mex Partition Statistics}

\author{Brian Hopkins}
\address{Department of Mathematics and Statistics, Saint Peter's University, Jersey City, NJ 07306, USA}
\email{bhopkins@saintpeters.edu}

\author{James A. Sellers}
\address{Department of Mathematics and Statistics, University of Minnesota Duluth, Duluth, MN 55812, USA}
\email{jsellers@d.umn.edu}

\author{Ae Ja Yee}
\address{Department of Mathematics, The Pennsylvania State University, University Park, PA 16802, USA}
\email{yee@psu.edu}

\subjclass[2010]{11P81, 05A17}

\keywords{partitions, crank, mex, Frobenius symbol, combinatorial proofs}

\maketitle

\begin{abstract}
Several authors have recently considered the smallest positive part missing from an integer partition, known as the minimum excludant or mex.  In this work, we revisit and extend connections between Dyson's crank statistics, the mex, and Frobenius symbols, with a focus on combinatorial proof techniques.  One highlight is a generating function expression for the number of partitions with a bounded crank that does not include an alternating sum.  This leads to a combinatorial interpretation involving types of Durfee rectangles. 
\end{abstract}

\section{Preliminaries} \label{intro}
For a positive integer $n$, a partition of $n$ is a finite sequence of integers $\lambda = \lambda_1, \lambda_2, \ldots, \lambda_r$ with $\lambda_1 \geq \lambda_2 \geq \cdots \geq \lambda_r \geq 1$ such that $\lambda_1 + \lambda_2 + \cdots + \lambda_r = n$.  Write $\ell(\lambda) = r$ for the length of $\lambda$ or number of parts and $|\lambda| = n$ for its weight or sum. Let $p(n)$ denote the number of partitions of $n$ and define $p(0) = 1$.  Also, let $q(n)$ denote the number of partitions of $n$ with distinct parts.

In 1944, Dyson \cite{dyson} suggested the existence of an integer partition statistic, which he called the crank, to combinatorially prove a divisibility property of the function $p(n)$ proven 25 years earlier by Ramanujan via generating function manipulations.  In 1988, Andrews and Garvan \cite{ag} provided a definition of the elusive crank, which we restate here.  

For a partition $\lambda$, let 
$\omega(\lambda)$ be the number of parts 1 and $\mu(\lambda)$ the number of parts greater than $\omega(\lambda)$. Then the crank of $\lambda$ is
\begin{equation}
\text{crank}(\lambda) =\begin{cases} \lambda_1 & \text{ if $\omega(\lambda)=0$},\\
\mu(\lambda) - \omega(\lambda) & \text{ if $\omega(\lambda)>0$}.
\end{cases} \label{crank}
\end{equation}

\begin{example}
$\crank(5,4,4,2,2) = 5$ and $\crank(5,4,2,2,1,1) = 2- 2=0$. 
\end{example}

In a separate work, Garvan \cite{gar} provided very useful generating function results related to the crank statistic.  Given integers $m$ and $n>1$, 
let $M(m,n)$ be the number of partitions of $n$ with crank $m$. Also, define $M(0,0)=M(1,1)=-M(0,1)=M(-1,1)=1$.  His results incorporate the standard $q$-series notation
\[(a;q)_n = (1-a)(1-aq)\cdots(1 - aq^{n-1}), \quad (a;q)_\infty = \lim_{n\rightarrow \infty} (a;q)_n.\]

\begin{theorem}[Garvan] \label{thm1.4} 
We have
\begin{align}
\sum_{m=-\infty}^{\infty} \sum_{n\ge 0} M(m,n) z^m q^n &=\frac{(q;q)_{\infty}}{(zq;q)_{\infty} (q/z;q)_{\infty}}, \label{crank1}\\
\sum_{n\ge 0} M(m,n) q^n  & =\frac{1}{(q;q)_{\infty}} \sum_{n\ge 1} (-1)^{n-1} q^{n(n-1)/2 +n |m| }(1-q^n). \label{crank2}
\end{align}
\end{theorem}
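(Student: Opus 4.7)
The statement comprises two generating function identities. I would establish \eqref{crank1} first by a direct bivariate computation from the crank definition, then extract the coefficient of $z^m$ to obtain \eqref{crank2}.

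For \eqref{crank1}, I would split $\sum_\lambda z^{\crank(\lambda)} q^{|\lambda|}$ according to the value of $\omega(\lambda)$. The case $\omega(\lambda)=0$ contributes $\sum_{\lambda:\,\omega=0} z^{\lambda_1} q^{|\lambda|}$, which, organized by the value $k = \lambda_1$, yields a tractable sum involving $q^k/\prod_{j=2}^{k}(1-q^j)$. For each $k\geq 1$ with $\omega(\lambda)=k$, I would decompose $\lambda$ uniquely into three pieces: its $k$ ones (weight $z^{-k}q^k$), its parts in $\{2,\ldots,k\}$ (weight only in $q$), and its parts exceeding $k$ (each carrying an extra $z$ since they count in $\mu(\lambda)$). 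Summing these $k$-indexed contributions and combining with the $\omega=0$ piece, standard $q$-series manipulations should reorganize the total into $(q;q)_\infty / ((zq;q)_\infty (q/z;q)_\infty)$. Verifying the $z\leftrightarrow z^{-1}$ symmetry of the resulting expression serves as a useful sanity check, as it mirrors the (non-obvious) equality $M(m,n) = M(-m,n)$.

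For \eqref{crank2}, I would extract the coefficient of $z^m$ from the right-hand side of \eqref{crank1}. Using $(zq;q)_\infty (1-z) = (z;q)_\infty$ together with the Jacobi triple product
\[(z;q)_\infty (q/z;q)_\infty (q;q)_\infty = \sum_{k\in\mathbb{Z}} (-1)^k z^k q^{k(k-1)/2},\]
I obtain $(q;q)_\infty /((zq;q)_\infty (q/z;q)_\infty) = (1-z)(q;q)_\infty^2 / \sum_{k}(-1)^k z^k q^{k(k-1)/2}$. A Mittag-Leffler-style partial fraction expansion in $z$ (equivalently, an application of Ramanujan's $_1\psi_1$ summation) then yields an explicit series in $z$ whose coefficient of $z^m$ can be identified. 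The symmetry $z \leftrightarrow z^{-1}$ lets me write the answer uniformly in $|m|$, and the resulting alternating sum should telescope into the $(1-q^n)$ differences appearing in \eqref{crank2}.

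\emph{Main obstacle.} The combinatorial bookkeeping in \eqref{crank1} is largely mechanical, with the one subtlety being that parts equal to $k+1$ (for $\omega=k$) must be placed under category (iii). The hard step is the coefficient extraction for \eqref{crank2}: moving from the infinite product to the explicit alternating sum requires a classical bilateral identity (such as $_1\psi_1$) or a carefully justified partial fraction expansion in $z$, neither of which is available from the elementary pentagonal-number-theorem toolkit. Recognizing that the resulting telescoping produces the $q^{n(n-1)/2 + n|m|}(1-q^n)$ pattern is the technical core of this half of the proof.
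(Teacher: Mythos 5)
The paper itself does not prove Theorem~\ref{thm1.4}; it quotes it from Garvan \cite{gar}, with the crank and the conventions on $M(m,n)$ coming from Andrews--Garvan \cite{ag}, so your proposal has to be measured against that literature, where your overall route (decompose by $\omega(\lambda)$ for \eqref{crank1}, extract the coefficient of $z^m$ via a partial-fraction/$_1\psi_1$ expansion for \eqref{crank2}) is indeed the standard one. The genuine gap is in your plan for \eqref{crank1}: the left-hand side of \eqref{crank1} is \emph{not} $\sum_\lambda z^{\crank(\lambda)}q^{|\lambda|}$. The special values $M(1,1)=M(-1,1)=-M(0,1)=1$ are imposed precisely because the two differ at $q^1$: the unique partition of $1$ has $\omega=1$, $\mu=0$, hence crank $-1$, so the partition sum contributes $z^{-1}q$, while the $q$-coefficient of the product is $z+z^{-1}-1$. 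Consequently
\[
\sum_\lambda z^{\crank(\lambda)}q^{|\lambda|}=\frac{(q;q)_\infty}{(zq;q)_\infty(q/z;q)_\infty}-(z-1)q,
\]
so the three-piece sum you set up (which is the correct decomposition) cannot be ``reorganized into'' the product by any manipulation; it equals the product minus $(z-1)q$, and your argument must carry this correction term and then invoke the conventional values $M(\pm 1,1)$, $M(0,1)$ to recover \eqref{crank1} as stated. As written, your plan would be trying to verify an identity that is false.

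Relatedly, calling the \eqref{crank1} bookkeeping ``largely mechanical'' misplaces the difficulty: summing the $\omega$-decomposition, namely $1+\sum_{k\ge 2} z^kq^k/(q^2;q)_{k-1}+\sum_{k\ge 1}(q/z)^k\big((q^2;q)_{k-1}(zq^{k+1};q)_\infty\big)^{-1}$, to the product-minus-correction is the substantive content of the Andrews--Garvan theorem and requires a nontrivial $q$-series identity, comparable in weight to the bilateral summation you correctly flag as the crux of \eqref{crank2}. Your sketch for \eqref{crank2} is sound and is essentially Garvan's own derivation: one expands $\frac{(q;q)_\infty}{(zq;q)_\infty(q/z;q)_\infty}=\frac{1-z}{(q;q)_\infty}\sum_{n=-\infty}^{\infty}\frac{(-1)^nq^{n(n+1)/2}}{1-zq^n}$ and picks off the coefficient of $z^m$, the $n$ and $-n$ contributions combining to produce the factor $1-q^n$ in \eqref{crank2}. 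But you should make explicit the annulus $|q|<|z|<1$ (or a purely formal justification) governing how each $\tfrac{1}{1-zq^n}$ is expanded for $n\ge 0$ versus $n<0$, since that is exactly where the sign and index bookkeeping, and hence the final $q^{n(n-1)/2+|m|n}(1-q^n)$ pattern, can silently go wrong.
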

Note that it is clear from \eqref{crank1} that, for all integers $m$,
\begin{equation}
M(m,n)=M(-m,n). \label{cranksymmetry}
\end{equation}

In recent years, another integer partition statistic has arisen which is much easier to define, commonly known as the mex of a partition, from minimal excludant.  The mex of a set of integers is the smallest positive integer not in the set.  

\begin{example} 
$\mex(5,4,4,2,2) = 1$, $\mex(5,4,2,1,1) = 3$, and $\mex(4,3,2,1) = 5$. 
\end{example}

The application of the mex to partitions dates to at least 2006 with Grabner and Knopfmacher \cite{gk}; they called it the least gap.  In 2011, Andrews \cite{a11}  worked with ``the smallest part that is {\it not} a summand.''  Recently, Andrews and Newman \cite{an19, an20} began using the term mex and introduced various related statistics.
See da Silva and Sellers \cite{ds} for parity results on some of these mex-related functions. 

The initial connection between crank and mex was found independently in \cite[Theorem 2]{an20} and \cite[Theorem 1]{hs}:

\begin{theorem}[Andrews, Newman; Hopkins, Sellers] \label{cm}
The number of partitions of $n$ with nonnegative crank equals the number of partitions of $n$ with odd mex.
\end{theorem}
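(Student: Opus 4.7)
The plan is to establish the equality by showing that the generating functions for both sides agree. First, I would write down the generating function for partitions with mex exactly $m$: since such a partition must contain each of $1,2,\ldots,m-1$ at least once, must omit $m$, and is unrestricted on parts exceeding $m$, its generating function is $q^{m(m-1)/2}/[(q;q)_{m-1}(q^{m+1};q)_\infty]$. Taking $m=2k+1$ and using the factorization $(q;q)_{2k}(1-q^{2k+1})(q^{2k+2};q)_\infty=(q;q)_\infty$, this simplifies to $q^{k(2k+1)}(1-q^{2k+1})/(q;q)_\infty$. Summing over $k\ge 0$ yields the generating function for partitions with odd mex:
\[
\sum_{n\ge 0} O(n)\,q^n \;=\; \frac{1}{(q;q)_\infty}\sum_{k\ge 0} q^{k(2k+1)}\bigl(1-q^{2k+1}\bigr).
\]

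Next, I would reduce the crank side. By the symmetry $M(m,n)=M(-m,n)$ from \eqref{cranksymmetry} (and checking Garvan's boundary conventions at $n=0,1$), the number of partitions of $n$ with nonnegative crank equals $(p(n)+M(0,n))/2$, so its generating function is $\tfrac{1}{2}\bigl(1/(q;q)_\infty + M_0(q)\bigr)$, where $M_0(q)$ comes from the $m=0$ case of Garvan's formula \eqref{crank2}. Thus the theorem reduces to the $q$-series identity
\[
2\sum_{k\ge 0} q^{k(2k+1)}(1-q^{2k+1}) \;=\; 1+\sum_{n\ge 1}(-1)^{n-1}q^{n(n-1)/2}(1-q^n).
\]

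Finally, I would verify this identity by showing both sides equal $2\sum_{j\ge 0}(-1)^j q^{T_j}$, where $T_j=j(j+1)/2$ denotes the $j$th triangular number. On the left, each summand $q^{k(2k+1)}(1-q^{2k+1})$ equals $q^{T_{2k}}-q^{T_{2k+1}}$, producing a single copy of each triangular power with alternating sign. On the right, the telescoping $q^{n(n-1)/2}(1-q^n)=q^{T_{n-1}}-q^{T_n}$ makes each $T_j$ for $j\ge 1$ appear twice with matching sign $(-1)^j$, and the extra $+1$ absorbs the lone $T_0$ contribution. Granted Theorem \ref{thm1.4}, the main obstacle is simply this last bookkeeping step; a purely bijective alternative between partitions with odd mex $\ge 3$ and those with $\omega(\lambda)\ge 1$ and $\mu(\lambda)\ge\omega(\lambda)$ is not transparent from small cases (for instance at $n=6$, conjugation matches $(5,1)\leftrightarrow(2,1,1,1,1)$ but fails to pair $(3,2,1)$ with $(2,2,1,1)$), so the generating function route seems the cleanest.
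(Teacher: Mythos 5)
Your argument is correct. Note first that the paper itself does not prove Theorem~\ref{cm}; it is quoted from Andrews--Newman and Hopkins--Sellers, and the authors explicitly remark that a combinatorial proof has eluded them, so the only fair comparison is with the generating-function proofs in those sources and with the machinery the paper provides. Your mex side is exactly the standard computation (and agrees with the expression $\frac{1}{(q;q)_\infty}\sum_{k\ge 0}q^{k(2k+1)}(1-q^{2k+1})$ the paper uses for $m_{1,2}(n)$ in Section~3). On the crank side you take a slightly longer route than necessary: instead of invoking \eqref{cranksum} with $j=0$, which gives $\sum_{m\ge 0}\sum_n M(m,n)q^n=\frac{1}{(q;q)_\infty}\sum_{n\ge 0}(-1)^nq^{n(n+1)/2}$ and finishes immediately since $q^{k(2k+1)}(1-q^{2k+1})=q^{T_{2k}}-q^{T_{2k+1}}$, you pass through $\sum_{m\ge 0}M(m,n)=(p(n)+M(0,n))/2$ and the $m=0$ case of \eqref{crank2}, and then verify the resulting telescoping identity. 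That verification is correct (both sides equal $2\sum_{j\ge 0}(-1)^jq^{j(j+1)/2}$, and the triangular numbers are distinct so there are no collisions), and your flag about Garvan's conventions at $n=0,1$ is handled properly, since $\sum_{m\ge 0}M(m,1)=-1+1=0$ matches the true count. So the proposal is a valid proof by essentially the same generating-function method as the cited sources; the only comment is that it re-derives the $j=0$ instance of Theorem~\ref{thm1.2} by hand, and your closing observation that a bijective proof is not apparent matches the paper's own assessment.
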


Further results of Hopkins and Sellers \cite{hs}, followed by Hopkins, Sellers, and Stanton \cite{hss}, establish additional connections between partitions satisfying certain crank conditions and partitions with certain mex properties.  Those proofs primarily use generating function identities.  Here we provide a combinatorial perspective on some of those results and establish new ones.  Also, recent work of Huh and Kim \cite{hk} includes results involving some of these ideas.  See the end of this section for a more detailed summary of this paper.

Here are a few more concepts we will use.  

In addition to the notation $(a;q)_n$, we need the $q$-binomial coefficients
\[ \begin{bmatrix} n \\ d \end{bmatrix} = \frac{(q;q)_n}{(q;q)_d (q;q)_{n-d}} \]
which are also known as Gaussian polynomials.  In particular, we will use the identity \cite[(I.43)]{gr}
\begin{equation}
\frac{1}{(q;q)_d}=\frac{1}{(q^{n-d+1};q)_d}
\begin{bmatrix} n \\ d
\end{bmatrix} \label{binomial}
\end{equation}
with the following combinatorial interpretation: Partitions into at most $d$ parts are in bijection with pairs of partitions where the first is a partition into parts between $n-d+1$ and $n$ and the second is a partition into at most $d$ parts all at most $n-d$.  For more details, see the material on the ``$k$th excess'' of a partition \cite[p. 50]{ab}.

The Frobenius symbol of a partition consists of two rows of strictly decreasing nonnegative integers.  Given the Ferrers diagram of a partition, the top row of the Frobenius symbol gives the number of boxes to the right of the diagonal entries and the bottom row gives the number of boxes below the diagonal entries.

\begin{example}
The partition $5,4,4,2,2$ has Frobenius symbol
\[ \begin{pmatrix} 4 & 2 & 1 \\ 4 & 3 & 0 \end{pmatrix}.\]
\end{example}

For an integer $j\ge 0$, we define the $j$-Durfee rectangle of a partition $\lambda$ to be the largest rectangle of size $d\times (d+j)$ that fits inside the Ferrers diagram of $\lambda$.  If $j=0$, then the $j$-Durfee rectangle is the well-known Durfee square (whose dimensions are the number of columns in the Frobenius symbol).

\begin{example} 
The partition $5,4,4,2,2$ has
\begin{itemize}
\item 0-Durfee rectangle (Durfee square) size $3 \times 3$,
\item 1-Durfee rectangle size $3 \times 4$,
\item 2-Durfee rectangle size $2 \times 4$,
\item 3-Durfee rectangle size $1 \times 4$, and
\item 4-Durfee rectangle size $1 \times 5$.
\end{itemize}
\end{example}

Many of our results concern the number of partitions having crank values bounded below by a given integer.  For $j \ge 0$, the next theorem uses results of Garvan to express the number of partitions $\lambda$ with $\crank(\lambda) \ge j$.  The first identity follows from \eqref{cranksymmetry} and the second from \eqref{crank2}. 

\begin{theorem} \label{thm1.2}
For an integer $j \ge 0$, 
\begin{align}
\sum_{m\ge j} \sum_{n \ge 0} M(m,n) \, q^n & =\sum_{m\ge j} \sum_{n \ge 0} M(-m,n) \, q^n \nonumber \\
& =\frac{1}{(q;q)_{\infty}} \sum_{n\ge 0} (-1)^n q^{n(n+1)/2+j(n+1)}.  \label{cranksum}
\end{align}
\end{theorem}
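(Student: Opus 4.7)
The first equality is an immediate consequence of the symmetry \eqref{cranksymmetry}: since $M(m,n)=M(-m,n)$ for every integer $m$, substituting this identity term-by-term in the summand transforms one side into the other.

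For the second equality, the plan is to substitute Garvan's formula \eqref{crank2} into the left-hand side, interchange the order of summation (which is legitimate since for any fixed power of $q$ only finitely many pairs $(m,n)$ contribute), and evaluate the resulting geometric series in $m$. Since $m\ge j\ge 0$, we have $|m|=m$, so
\[
\sum_{m\ge j}\sum_{n\ge 0} M(m,n)\,q^n
=\frac{1}{(q;q)_\infty}\sum_{n\ge 1}(-1)^{n-1}q^{n(n-1)/2}(1-q^n)\sum_{m\ge j}q^{nm}.
\]
The inner geometric sum equals $q^{jn}/(1-q^n)$, which cancels the factor $(1-q^n)$ and leaves
\[
\frac{1}{(q;q)_\infty}\sum_{n\ge 1}(-1)^{n-1}q^{n(n-1)/2+jn}.
\]

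Finally, shift the index by setting $k=n-1$, so $k\ge 0$, $(-1)^{n-1}=(-1)^k$, and $n(n-1)/2+jn=k(k+1)/2+j(k+1)$; renaming $k$ back to $n$ yields the claimed right-hand side. There is no real obstacle beyond bookkeeping: the only steps are the sign check $|m|=m$, the geometric series evaluation, the cancellation of $(1-q^n)$, and the reindexing.
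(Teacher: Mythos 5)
Your proof is correct and follows exactly the route the paper intends: the first equality from the symmetry \eqref{cranksymmetry} and the second by substituting Garvan's formula \eqref{crank2}, summing the geometric series in $m$ to cancel $(1-q^n)$, and reindexing. The paper merely asserts these two steps without detail, so your write-up simply supplies the bookkeeping, and all of it checks out.
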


In Section~\ref{bigthm}, we present a generating function result that is the foundation for much of the paper.  Unlike \eqref{cranksum}, the generating function of Theorem~\ref{thm2.1} is not an alternating sum and leads more easily to combinatorial arguments.  As with several of our results, we present both analytic and combinatorial proofs.  In fact, Section 2 includes a second combinatorial proof of a lemma used in the analytic proof of Theorem~\ref{thm2.1}.  In Section~\ref{omex} we turn to the mex statistic, in particular partitions with odd mex further refined by the parity of the length and the mex modulo 4 (deriving their generating functions uses Theorem~\ref{thm2.1}).  This leads to easier proofs of some earlier results and expanded connections between partitions with certain mex characteristics and certain ranges of crank values.  
Finally, in Section~\ref{Frob} we reconsider relations between the crank and the Frobenius symbol from a combinatorial perspective, including another application of Theorem~\ref{thm2.1}.

\section{Bounded crank generating function} \label{bigthm}

Our first major result establishes another generating function for the number of partitions with crank bounded below by an arbitrary nonnegative integer.

\begin{theorem} \label{thm2.1}
For an integer $j\ge 0$, 
\begin{equation*}
\sum_{m\ge j} \sum_{n\ge 0} M(m,n) q^n =\sum_{n\ge 0} \frac{q^{(n+1)(n+j)}}{(q;q)_n (q;q)_{n+j}}.
\end{equation*}
\end{theorem}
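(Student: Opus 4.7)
The plan is to reduce Theorem~\ref{thm2.1} to a $q$-series identity via Theorem~\ref{thm1.2} and then establish that identity, either analytically through a lemma or combinatorially through a bijection. By equation~\eqref{cranksum},
\[
\sum_{m\ge j}\sum_{n\ge 0} M(m,n)\,q^n \;=\; \frac{1}{(q;q)_\infty}\sum_{n\ge 0}(-1)^n q^{n(n+1)/2 + j(n+1)},
\]
so the theorem is equivalent to proving
\[
\sum_{n\ge 0}\frac{q^{(n+1)(n+j)}}{(q;q)_n(q;q)_{n+j}} \;=\; \frac{1}{(q;q)_\infty}\sum_{n\ge 0}(-1)^n q^{n(n+1)/2 + j(n+1)}.
\]

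For the analytic route, I would multiply this identity through by $(q;q)_\infty$ and use $(q;q)_\infty/(q;q)_{n+j} = (q^{n+j+1};q)_\infty$; expanding the latter via Euler's identity $(x;q)_\infty = \sum_{k\ge 0}(-1)^k q^{\binom{k}{2}}x^k/(q;q)_k$ with $x=q^{n+j+1}$ produces a double sum in indices $n$ and $k$. After reversing the order of summation, the inner sum in $n$ should collapse via a Cauchy-type or $q$-binomial evaluation (leveraging the identity~\eqref{binomial} recalled in Section~\ref{intro}), so that for each fixed $k$ only the single term $(-1)^k q^{k(k+1)/2 + j(k+1)}$ survives. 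Isolating this inner-sum evaluation as a standalone statement gives the lemma alluded to in the preamble of Section~\ref{bigthm}, which the authors promise to prove twice (analytically and combinatorially).

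For the combinatorial route, I would interpret the $n$-th summand on the left of the reduced identity as the generating function for partitions $\lambda$ with $\lambda_{n+1}=n+j$: the $n\times(n+j)$ block on top contributes $q^{n(n+j)}$; the factor $1/(q;q)_n$ records an unrestricted extension of the top $n$ rows to the right; $q^{n+j}$ fixes the $(n{+}1)$-st row at value exactly $n+j$; and $1/(q;q)_{n+j}$ records the subsequent rows (each of size $\le n+j$). Because a weakly decreasing sequence can satisfy $\lambda_{n+1}=n+j$ for at most one $n$, the outer sum counts each such partition exactly once, yielding a clean partition class to place in bijection with partitions of crank $\ge j$. The main obstacle in either route is the cancellation hidden in passing from the positive-termed series to the alternating sum: analytically, the double-sum collapse must be arranged precisely for every $k$; combinatorially, the explicit bijection is delicate because the definition of $\crank(\lambda)$ branches on whether $\omega(\lambda)=0$, likely forcing a case split.
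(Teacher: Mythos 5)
Your reduction via \eqref{cranksum} is exactly the paper's first step, and your reading of each summand $q^{(n+1)(n+j)}/\bigl((q;q)_n(q;q)_{n+j}\bigr)$ as the generating function for partitions with $\lambda_{n+1}=n+j$ is correct. However, in both of your routes the decisive step is left unproven. Analytically, the ``collapse'' you hope for after expanding $(q^{n+j+1};q)_\infty$ by Euler's series and swapping sums is precisely the content of the needed lemma (the paper's Lemma~\ref{lem2.2}); the paper obtains it in one line from Fine's identity \eqref{fine1} with $t\to q$, $b\to q^j$, multiplied by $q^j/(q;q)_j$. In your double sum the inner sum over $n$ does not collapse termwise for fixed $k$; one needs a further nontrivial regrouping (essentially re-deriving a Rogers--Fine type identity), and asserting that it ``should collapse via a Cauchy-type or $q$-binomial evaluation'' is exactly the point at issue, not a routine verification.

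Combinatorially, the gap is more substantive: the class you extract (partitions having $\lambda_{n+1}=n+j$ for some $n$) is genuinely different from the crank class. For instance, with $j=0$ the partition $(3,2,1,1)$ has crank $-1\le 0$ but no index $n$ with $\lambda_{n+1}=n$, so even after your interpretation you would still need a nontrivial bijection, which you do not supply and correctly flag as ``delicate.'' The idea you are missing is the paper's use of the symmetry \eqref{cranksymmetry}: replace $\crank(\lambda)\ge j$ by $\crank(\lambda)\le -j$. Then $\omega(\lambda)=0$ is impossible (it would force positive crank), so the awkward case split disappears, and relative to the $j$-Durfee rectangle of size $d\times(d+j)$ one shows that $\crank(\lambda)\le -j$ holds exactly when $\omega(\lambda)\ge d+j$. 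With that characterization the summand is read differently from your reading: $q^{d(d+j)}$ for the rectangle, an extra $q^{d+j}$ for the $d+j$ forced parts equal to $1$, $1/(q;q)_d$ for the parts to the right of the rectangle, and $1/(q;q)_{d+j}$ for the remaining parts below it. The series is then directly the generating function of the crank class itself, and no auxiliary bijection is required.
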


Note that, in contrast to \eqref{cranksum} of Theorem~\ref{thm1.2}, the right-hand side here does not involve an alternating sum.  This makes it more amenable to combinatorial interpretation, as we will see in the proofs below.  A key insight is to use the symmetry \eqref{cranksymmetry} and focus on partitions with nonpositive crank which arise only from the second part of the definition \eqref{crank}.

\subsection{Analytic and combinatorial proofs of Theorem~\ref{thm2.1}}

For an analytic proof of Theorem~\ref{thm2.1}, we need the following identity given by Fine \cite[(20.51)]{f}.

\begin{lemma}[Fine] \label{lem1.6}
We have
\begin{align}
(t;q)_{\infty} \sum_{n\ge 0} \frac{t^n}{(q;q)_n (bq;q)_n} 
&=
\frac{1}{(bq;q)_{\infty}} \sum_{n\ge 0} \frac{(t;q)_n}{(q;q)_n} (-b)^n q^{n(n+1)/2} \nonumber \\
&= \sum_{n\ge 0} \frac{(bt)^n q^{n^2}}{(q;q)_n(bq;q)_n}.  \label{fine1}
\end{align}
\end{lemma}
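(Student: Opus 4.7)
The plan is to prove the chain of equalities in \eqref{fine1} by establishing the two equalities separately, in each case using only the two Euler identities
\[
(z;q)_\infty = \sum_{k\ge 0} \frac{(-z)^k q^{k(k-1)/2}}{(q;q)_k}, \qquad \frac{1}{(z;q)_\infty} = \sum_{k\ge 0} \frac{z^k}{(q;q)_k},
\]
together with the finite $q$-binomial theorem $(z;q)_n = \sum_{k} \begin{bmatrix} n \\ k \end{bmatrix} (-z)^k q^{k(k-1)/2}$. In each half the strategy is the same: rewrite a problematic factor ($1/(bq;q)_n$ in one direction, $(t;q)_n$ in the other) as a sum, swap the order of summation, and recognize the inner series as one of the expansions above.

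For the equality of the left-hand side with the middle expression, I would start from the left-hand side and use $1/(bq;q)_n = (bq^{n+1};q)_\infty / (bq;q)_\infty$ to pull $(bq;q)_\infty^{-1}$ out of the $n$-sum. Expanding $(bq^{n+1};q)_\infty$ by the first Euler identity in a new index $k$ yields a double series in which the exponent of $q$ splits as $kn + k(k+1)/2$, so the inner $n$-sum collapses via the second Euler identity to $1/(tq^k;q)_\infty$. The remaining factor $(t;q)_\infty$ combines with this as $(t;q)_\infty / (tq^k;q)_\infty = (t;q)_k$, producing precisely the middle expression.

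For the equality of the middle expression with the right-hand side, I would expand $(t;q)_n$ in the middle by the finite $q$-binomial theorem, exchange the order of summation, and substitute $n = m + k$. The exponent of $q$ becomes $m(m+1)/2 + mk + k^2$, so the resulting inner $m$-sum has the form $\sum_m (-bq^{k+1})^m q^{m(m-1)/2}/(q;q)_m$, which the first Euler identity evaluates to $(bq^{k+1};q)_\infty$. Writing $(bq^{k+1};q)_\infty = (bq;q)_\infty / (bq;q)_k$ cancels the outer $1/(bq;q)_\infty$ and leaves the right-hand side of \eqref{fine1}.

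The main obstacle, such as it is, is bookkeeping for the exponents of $q$: one needs to verify the identity $(m+k)(m+k+1)/2 + k(k-1)/2 = m(m+1)/2 + mk + k^2$ used in the second half and the analogous separation $k(n+1) + k(k-1)/2 = kn + k(k+1)/2$ used in the first. Beyond those two short computations the argument is entirely formal manipulation of $q$-series, and no convergence issues arise since everything takes place in the ring of formal power series in $q$, $t$, and $b$.
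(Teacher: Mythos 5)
Your argument is correct; I checked both halves. In the first half the exponent bookkeeping $k(n+1)+k(k-1)/2 = kn+k(k+1)/2$ is right, the inner $n$-sum does collapse to $1/(tq^k;q)_\infty$ via Euler's second identity, and $(t;q)_\infty/(tq^k;q)_\infty=(t;q)_k$ gives the middle expression. In the second half, $(m+k)(m+k+1)/2+k(k-1)/2 = m(m+1)/2+mk+k^2$ also checks out, the inner $m$-sum is indeed $\sum_m (-bq^{k+1})^m q^{m(m-1)/2}/(q;q)_m=(bq^{k+1};q)_\infty$, and $(bq^{k+1};q)_\infty=(bq;q)_\infty/(bq;q)_k$ produces the right-hand side. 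Note, though, that the paper does not prove this lemma at all: it is quoted directly from Fine's book (his equation (20.51)) and used as a black box in the analytic proof of Lemma~\ref{lem2.2}. So what you have written is not an alternative to the paper's proof but a self-contained derivation the paper omits, built only from the two Euler expansions and the finite $q$-binomial theorem; that is a perfectly standard and legitimate way to establish the identity, and it has the advantage of keeping the whole chain of reasoning elementary and verifiable without consulting Fine. If you wanted to match the combinatorial spirit of the rest of the paper, one could also seek a bijective or involutive proof, but that is not required here.
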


In particular, we use the following application of Lemma~\ref{lem1.6}.

\begin{lemma} \label{lem2.2}
For $j\ge 0$,
\[ \frac{1}{(q;q)_{\infty}} \sum_{n\ge 0} (-1)^n q^{n(n+1)/2+j(n+1)}=\sum_{n\ge 0} \frac{q^{(n+1)(n+j)}}{(q;q)_n (q;q)_{n+j}}. \]
\end{lemma}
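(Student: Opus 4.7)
The plan is to apply Fine's identity (Lemma~\ref{lem1.6}) with the specialization $t = q$ and $b = q^j$, then match the resulting two expressions to the two sides of Lemma~\ref{lem2.2} after a small amount of bookkeeping.

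First I would rewrite the right-hand side of Lemma~\ref{lem2.2} in a form that exposes the shifted $q$-Pochhammer symbol $(q^{j+1};q)_n$. Using $(q;q)_{n+j} = (q;q)_j \, (q^{j+1};q)_n$ and expanding $(n+1)(n+j) = n^2 + n(j+1) + j$, one gets
\[
\sum_{n \ge 0} \frac{q^{(n+1)(n+j)}}{(q;q)_n (q;q)_{n+j}}
= \frac{q^j}{(q;q)_j} \sum_{n \ge 0} \frac{q^{n^2} \cdot q^{n(j+1)}}{(q;q)_n (q^{j+1};q)_n}.
\]
The inner sum is exactly the last expression in Fine's identity with the choices $b = q^j$ and $t = q$, since $(bt)^n q^{n^2} = q^{n(j+1)} q^{n^2}$ and $(bq;q)_n = (q^{j+1};q)_n$.

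Next I would replace that sum by the middle expression in Fine's identity, again with $b = q^j$ and $t = q$. Since $(q;q)_n/(q;q)_n = 1$, the specialization gives
\[
\sum_{n \ge 0} \frac{q^{n^2 + n(j+1)}}{(q;q)_n (q^{j+1};q)_n}
= \frac{1}{(q^{j+1};q)_\infty} \sum_{n \ge 0} (-1)^n q^{nj + n(n+1)/2}.
\]
Putting the two steps together, I would absorb the normalizing factor using the standard factorization $(q;q)_j (q^{j+1};q)_\infty = (q;q)_\infty$, and rearrange the exponent as $n(n+1)/2 + nj + j = n(n+1)/2 + j(n+1)$. This yields precisely the left-hand side of Lemma~\ref{lem2.2}.

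The argument is essentially mechanical once the right substitution in Fine's identity is spotted, so the main (and essentially only) obstacle is guessing the specialization $t=q$, $b=q^j$. I would locate it by matching the structure of the summand $q^{(n+1)(n+j)}/[(q;q)_n(q;q)_{n+j}]$ against the last form in \eqref{fine1}: the denominator forces $bq = q^{j+1}$, and the exponent $n^2 + n(j+1)$ after extracting $q^j/(q;q)_j$ forces $bt = q^{j+1}$, pinning down $t=q$. Everything else is careful exponent and Pochhammer arithmetic.
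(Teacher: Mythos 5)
Your proposal is correct and matches the paper's analytic proof of Lemma~\ref{lem2.2}: the same specialization $t=q$, $b=q^j$ in Fine's identity, combined with extracting the factor $q^j/(q;q)_j$ and using $(q;q)_j(q^{j+1};q)_\infty=(q;q)_\infty$. The only difference is cosmetic (you start from the right-hand side and work backwards), so no further comment is needed.
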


\begin{proof}[Analytic proof of Lemma~\ref{lem2.2}]
Setting $t\to q$ and $b\to q^j$ in \eqref{fine1} gives
\[ \frac{1}{(q^{j+1};q)_{\infty}} \sum_{n\ge 0} (-1)^n q^{n(n+1)/2+nj }=  \sum_{n\ge 0} \frac{ q^{n^2+n(j+1)}}{(q,q^{j+1};q)_n}. \]
Now multiply both sides
by $q^{j}/(q;q)_{j}$.
\end{proof}

\begin{proof}[Analytic proof of Theorem~\ref{thm2.1}]
The result follows directly from \eqref{cranksum} and Lemma~\ref{lem2.2}.
\end{proof}

Our combinatorial proof uses the $j$-Durfee rectangles discussed in Section~\ref{intro}.

\begin{proof}[Combinatorial proof of Theorem~\ref{thm2.1}]
By \eqref{cranksymmetry}, it is sufficient to check the generating function for partitions $\lambda$ with $\crank(\lambda) \le -j$. 

Note that for a partition $\lambda$,  if $\omega(\lambda)=0$, then $\crank(\lambda)>0$. Thus, we suppose $\omega(\lambda)>0$. Consider the $j$-Durfee rectangle, with size $d\times (d+j)$. Since $\lambda_{d}\ge d+j$, we see that  if $\omega(\lambda)< d+j$, then
$ \mu(\lambda)\ge d$, so
\begin{equation*}
\text{crank}(\lambda) =\mu(\lambda) - \omega(\lambda)> d - (d+j) =-j,
\end{equation*}
from which we see that if $\text{crank}(\lambda) \le -j$, then
$
\omega(\lambda)\ge d+j
$.
The generating function for such $\lambda$ is
\begin{equation*}
\sum_{d\ge 0} \frac{q^{d(d+j) + (d+j)}}{(q;q)_d (q;q)_{d+j}}
\end{equation*} 
where the exponent $d(d+j) +(d+j)$ of $q$ in the numerator accounts for the $j$-Durfee rectangle and the lower bound of $\omega(\lambda)$, while the factors $(q;q)_{d}$ and $(q;q)_{d+j}$ in the denominator account for the parts to the right of and below the $j$-Durfee rectangle, respectively.
\end{proof}

\subsection{Combinatorial proof of Lemma~\ref{lem2.2}}
Keeping with the combinatorial theme of the paper, we provide a combinatorial proof of Lemma~\ref{lem2.2} that uses certain triples of partitions and two forms of cancellation.

\begin{proof}[Combinatorial proof of Lemma~\ref{lem2.2}]
Setting $t\to q$ and $b\to q^j$ in Lemma~\ref{lem1.6} and multiplying by $q^j/(q;q)_{j}$, we get
\begin{align*}
(q;q)_{\infty} \sum_{n\ge 0}  \frac{q^{n+j}}{(q;q)_n (q;q)_{n+j}} &= \frac{1}{(q;q)_{\infty}} \sum_{n\ge 0}  (-1)^n q^{n(n+1)/2+j (n+1) } \\
&= \sum_{n\ge 0} \frac{ q^{(n+1)(n+j)}}{(q;q)_n(q;q)_{n+j}}.
\end{align*}
Thus, proving Lemma~\ref{lem2.2} combinatorially is equivalent to proving the following identities combinatorially:
\begin{align}
(q;q)_{\infty}  \sum_{n\ge 0}  \frac{q^{n+j}  }{(q;q)_n(q;q)_{n+j}}
&= \frac{1}{(q;q)_{\infty}} \sum_{n\ge 0}  (-1)^n q^{n(n+1)/2+j (n+1)}, \label{crank3}\\
(q;q)_{\infty} \sum_{n\ge 0}  \frac{q^{n+j}  }{(q;q)_n(q;q)_{n+j}} &= \sum_{n\ge 0} \frac{ q^{(n+1)(n+j)}}{(q;q)_n(q;q)_{n+j}}. \label{crank4}
\end{align}

Let $\mathcal{T}_j$ be the set of triples of partitions $(\pi; \kappa; \nu)$ such that $\pi$ is a partition into distinct parts, $\kappa$ is a nonempty partition with largest part at least $j$, and $\nu$ is a partition into parts that are at least $j$ less than the largest part of $\kappa$. 

\begin{example} The weight $5$ elements of $\mathcal{T}_3$ are
\[ (\emptyset; 3,2; \emptyset),  (\emptyset; 3,1,1; \emptyset),  (2; 3; \emptyset),  (1; 3,1; \emptyset),  
 (\emptyset; 4,1; \emptyset),  (1; 4;  \emptyset), (\emptyset; 4;1), (\emptyset; 5; \emptyset). \]
\end{example}

Since $\pi$ is independent of $\kappa$ and $\nu$, whereas the largest part of $\nu$ is at least $j$ less than the largest part of $\kappa$, we have
\begin{equation*}
\sum_{(\pi; \kappa; \nu) \in \mathcal{T}_j} (-1)^{\ell(\pi)} q^{|\pi |+|\kappa|+|\nu|}=\sum_{\pi} (-1)^{\ell(\pi)} q^{|\pi |}  \sum_{(\kappa;\nu)} q^{|\kappa|+|\nu|} 
\end{equation*}
where the second sum on the right-hand side is over all pairs of $\kappa$ and $\nu$ with the largest part of $\nu$ at least $j$ less than the largest part of $\kappa$.  
Thus,
\begin{equation}
\sum_{(\pi; \kappa; \nu) \in \mathcal{T}_j} (-1)^{\ell(\pi)} q^{|\pi |+|\kappa|+|\nu|}=(q;q)_{\infty} \sum_{n\ge 0} \frac{q^{n+j} }{(q;q)_n (q;q)_{n+j} }.\label{crank5}
\end{equation}

We make cancellations in $\mathcal{T}_j$ in two ways; the first cancellation will lead to \eqref{crank3}  and the second to \eqref{crank4}. 
For a triple $(\pi; \kappa; \nu)$ in $\mathcal{T}_j$, call the first largest part of $\kappa$ the peak and let the peak equal $n+j$ for some $n\ge 0$. 

$\bullet$ First cancellation: If all the parts of $\pi$ are greater than $n+j$ and $\kappa$ does not have any parts other than the peak, then we do nothing. 

If the smallest part of $\pi$ is at most the smallest part of $\kappa$, then we move the smallest part of $\pi$ to $\kappa$. If the smallest part of $\kappa$ that is not a peak is less than the smallest part of $\pi$, then we move the smallest part of $\kappa$ to $\pi$.  This process is clearly an involution. Also, it increases or decreases the number of parts of $\pi$ by $1$, so this is a sign reversing involution. Thus, after cancellation, the remaining triples in $\mathcal{T}_j$ are $(\pi; \kappa; \nu)$ where $\kappa$ has only the peak $n+j$, the parts of $\pi$ are greater than $n+j$ and distinct, and the parts of $\nu$ are at most $n$. Therefore, 
\[ \sum_{(\pi; \kappa; \nu) \in \mathcal{T}_j} (-1)^{\ell(\pi) } q^{|\pi |+|\kappa|+|\nu|}=\sum_{n\ge 0} \frac{q^{n+j} (q^{n+j+1};q)_{\infty}  }{(q;q)_n }. \]

We now make further adjustment on the remaining $(\pi; \kappa; \nu)$: Subtract $j$ from the peak, and successively subtract $j+1$ from the smallest part of $\pi$, $j+2$ from the second smallest part of $\pi$, and so on. The resulting parts of $\pi$ are at least $n$. Thus, all the parts of the resulting $\pi$, $\kappa$, and $\nu$ form an ordinary partition, and the subtracted sequence, i.e., $j, j+1, \ldots$, forms a partition into distinct parts differing by exactly $1$ with smallest part $j$.   Therefore, 
\[ \sum_{(\pi; \kappa; \nu) \in \mathcal{T}_j} (-1)^{\ell(\pi) } q^{|\pi |+|\kappa|+|\nu|}= \frac{1}{(q;q)_{\infty}} \sum_{n\ge 0} (-1)^n q^{n(n+1)/2+j (n+1)}.\]
Combining this with \eqref{crank5} completes the proof of \eqref{crank3}. 

$\bullet$ Second cancellation: We first make some adjustments on $(\pi; \kappa; \nu) \in \mathcal{T}_j$. Put the peak aside momentarily and take the $j$-Durfee rectangle of the non-peak parts of $\kappa$, which has size $d \times (d+j)$.  Since the parts of $\kappa$ are at most $n+j$, we see from the definition of the $j$-Durfee rectangle that to the right of the rectangle, there are at most $d$ parts that are at most $n-d$. By applying the bijection associated with \eqref{binomial} to those parts to the right of the Durfee rectangle with parts in $\nu$ that are between $n-d+1$ and $n$ (if they exist), we obtain a partition with at most $d$ parts. Putting this back to the right of the $j$-Durfee rectangle yields a partition with a $j$-Durfee rectangle and no restriction on part sizes. By abuse of notation, we denote by $\kappa$ the resulting partition with the $j$-Durfee rectangle and by $\nu$ the remaining parts at most $n-d$.  After dividing the peak into $d+j$ and $n-d$, add $d+j$ back to the new $\kappa$ and $n-d$ to the new $\nu$. 
Therefore, we have
\[ \sum_{(\pi; \kappa; \nu) \in \mathcal{T}_j} (-1)^{\ell(\pi) } q^{|\pi |+|\kappa|+|\nu|}
= (q;q)_{\infty} \sum_{d\ge 0} \frac{q^{d(d+j)+d+j}}{(q;q)_{d+j} (q)_d} \sum_{n\ge d} \frac{q^{n-d}}{(q;q)_{n-d} }. \]
Let $x$ be the smallest part of $\pi$ and $y$ the smallest part of $\nu$. If $x \le y$, then move $x$ to $\nu$. If $x > y$, then move $y$ to $\pi$. This is clearly a sign reversing involution. After this cancellation, the remaining triples are $(\emptyset; \kappa; \emptyset)$. Therefore, we have
\[ \sum_{(\pi; \kappa; \nu) \in \mathcal{T}_j} (-1)^{\ell(\pi) } q^{|\pi |+|\kappa|+|\nu|}= \sum_{d\ge 0} \frac{q^{d(d+j)+d+j}}{(q;q)_{d+j} (q)_d}. \]
Combining this with \eqref{crank5} completes the proof of \eqref{crank4}. 
\end{proof}

\section{Connecting mex and crank} \label{omex}

We now shift our attention to various functions related to the mex of an integer partition, ultimately with the goal of connecting such mex results with partitions satisfying certain crank conditions.  Toward this end, define $m_{a,b}(n)$ to be the number of partitions of weight $n$ with mex congruent to $a$ modulo $b$.  For example, $m_{1,2}(n)$ is the number of partitions of $n$ with odd mex.  In \cite{hss}, it was productive to split the odd mex partitions modulo 4; in our notation, these are $m_{1,4}(n)$ and $m_{3,4}(n)$.  (Note that the $m_{1,2}(n), m_{1,4}(n), m_{3,4}(n)$ are called $o(n), o_1(n), o_3(n)$, respectively, in \cite{hss}.)

Next, we show that breaking these counts down farther, by parity of partition length, shows new relations and allows for cleaner proofs of previous results.  We conclude the section with more refined connections between partitions with odd mex and those with nonpositive crank, answering a question posed in \cite{hss}.

\subsection{Refinements on odd mex by parity of length}

We will use the following lemma, which can be adapted from Ewell \cite[(6)]{e}:
\begin{equation*}
\sum_{n\ge 0} (-q)^{n(n+1)/2} = \prod_{n=1}^\infty \frac{1-q^{2n}}{1 + q^{2n-1}}.
\end{equation*}

\begin{lemma} \label{ewell}
We have
\[ \frac{1}{(q;q)_{\infty}} \sum_{n\ge 0} (-q)^{n(n+1)/2} =(-q^2;q^2)_{\infty}.\]
\end{lemma}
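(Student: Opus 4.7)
The statement we need to establish is essentially an algebraic identity in infinite products, and Ewell's identity stated just above the lemma does most of the work. The plan is to combine Ewell's evaluation of $\sum_{n \ge 0} (-q)^{n(n+1)/2}$ with standard factorizations of $(q;q)_\infty$ and an application of the classical identity $(-q^2;q^2)_\infty = 1/(q^2;q^4)_\infty$.

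First I would divide Ewell's identity by $(q;q)_\infty$ to obtain
\[
\frac{1}{(q;q)_\infty} \sum_{n \ge 0} (-q)^{n(n+1)/2}
= \frac{1}{(q;q)_\infty} \cdot \frac{(q^2;q^2)_\infty}{(-q;q^2)_\infty},
\]
where I have rewritten the right-hand product of Ewell as $(q^2;q^2)_\infty/(-q;q^2)_\infty$ using $\prod_{n\ge 1}(1-q^{2n}) = (q^2;q^2)_\infty$ and $\prod_{n\ge 1}(1+q^{2n-1}) = (-q;q^2)_\infty$.

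Next I would apply the splitting $(q;q)_\infty = (q;q^2)_\infty (q^2;q^2)_\infty$ to cancel the factor $(q^2;q^2)_\infty$, leaving
\[
\frac{1}{(q;q^2)_\infty (-q;q^2)_\infty}.
\]
The denominator telescopes via $(1-q^{2n-1})(1+q^{2n-1}) = 1 - q^{4n-2}$, so this equals $1/(q^2;q^4)_\infty$. The proof then concludes with the standard identity $(-q^2;q^2)_\infty = 1/(q^2;q^4)_\infty$, which itself follows at once from $(1+q^{2n})(1-q^{2n}) = 1 - q^{4n}$ and the resulting telescoping $\prod_{n\ge 1}(1+q^{2n}) = (q^4;q^4)_\infty/(q^2;q^2)_\infty = 1/(q^2;q^4)_\infty$.

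This is really only bookkeeping with infinite products, so there is no serious obstacle; the only thing to watch is keeping track of which base ($q$, $q^2$, or $q^4$) each Pochhammer symbol uses when splitting even/odd factors, which is routine.
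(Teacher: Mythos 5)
Your proof is correct and follows exactly the route the paper intends: the lemma is stated as an adaptation of Ewell's identity, and your manipulation (rewriting Ewell's product as $(q^2;q^2)_\infty/(-q;q^2)_\infty$, splitting $(q;q)_\infty$ into odd and even parts, and telescoping to $1/(q^2;q^4)_\infty = (-q^2;q^2)_\infty$) is precisely that adaptation, which the paper leaves implicit.
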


With this and Theorem~\ref{thm2.1}, generating functions for the initial refinements of the odd mex statistics can be found rather easily.

\begin{proposition}
We have
\begin{align}
\sum_{n\ge 0 } m_{1,4}(n) q^n & =\frac{ 1}{(q;q)_{\infty}} \sum_{k\ge 0}  q^{2k(4k+1)}(1-q^{4k+1}) \label{o1a} \\
& =\frac{1}{2} \left(\sum_{n\ge 0} \frac{q^{n(n+1)}}{(q;q)_n^2} +\sum_{n\ge 0} \frac{q^{n(n+1)}}{(q^2;q^2)_n}\right),\label{o1b} \\
\sum_{n\ge 0 } m_{3,4}(n) q^n & =\frac{ 1}{(q;q)_{\infty}} \sum_{k\ge 0}  q^{(2k+1)(4k+3)}(1-q^{4k+3}) \label{o3a} \\
& =\frac{1}{2} \left(\sum_{n\ge 0} \frac{q^{n(n+1)}}{(q;q)_n^2} - \sum_{n\ge 0} \frac{q^{n(n+1)}}{(q^2;q^2)_n}\right). \label{o3b}
\end{align}
\end{proposition}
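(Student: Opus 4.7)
The plan is to derive \eqref{o1a} and \eqref{o3a} by direct enumeration, and then obtain \eqref{o1b} and \eqref{o3b} by computing half the sum and half the difference of the two generating functions. A partition $\lambda$ satisfies $\mex(\lambda) = k$ exactly when it contains each of $1, 2, \ldots, k-1$ at least once and omits $k$; peeling off one mandatory copy of each of $1,\ldots,k-1$ and then freely adjoining arbitrary additional parts drawn from $\{1,\ldots,k-1\} \cup \{k+1, k+2, \ldots\}$ gives
\[
\sum_{\mex(\lambda) = k} q^{|\lambda|} = \frac{q^{k(k-1)/2}(1-q^k)}{(q;q)_\infty}.
\]
Substituting $k = 4j+1$ for $j \geq 0$ and summing produces \eqref{o1a}; substituting $k = 4j+3$ produces \eqref{o3a}.

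For the sum, $m_{1,4}(n) + m_{3,4}(n) = m_{1,2}(n)$ is the number of partitions of $n$ with odd mex. I would invoke Theorem~\ref{cm} to replace this by the number of partitions with $\crank \geq 0$, and then apply Theorem~\ref{thm2.1} with $j = 0$ to obtain the generating function $\sum_{n \geq 0} q^{n(n+1)}/(q;q)_n^2$. This accounts for the first summand in each of \eqref{o1b} and \eqref{o3b}.

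For the difference, I would subtract \eqref{o3a} from \eqref{o1a} and group the four resulting families of terms $+q^{2k(4k+1)}$, $-q^{(2k+1)(4k+1)}$, $-q^{(2k+1)(4k+3)}$, $+q^{(2k+2)(4k+3)}$ by the residue of a single new index $n$ modulo $4$; these match $(-1)^{n(n+1)/2} q^{n(n+1)/2}$ for $n \equiv 0, 1, 2, 3 \pmod 4$ respectively. Hence
\[
\sum_{n \geq 0} \bigl(m_{1,4}(n) - m_{3,4}(n)\bigr) q^n = \frac{1}{(q;q)_\infty} \sum_{n \geq 0} (-q)^{n(n+1)/2} = (-q^2;q^2)_\infty
\]
by Lemma~\ref{ewell}. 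Finally, the classical Euler identity $(-z;q)_\infty = \sum_{n \geq 0} q^{n(n-1)/2} z^n/(q;q)_n$, specialized to $z = q^2$ with base $q \mapsto q^2$, rewrites $(-q^2;q^2)_\infty$ as $\sum_{n \geq 0} q^{n(n+1)}/(q^2;q^2)_n$. Taking half the sum and half the difference then yields \eqref{o1b} and \eqref{o3b}.

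I expect the main obstacle to be the bookkeeping in the difference step: verifying that both the exponents and the signs really do match up through each of the four residue classes mod $4$ when one reassembles \eqref{o1a} minus \eqref{o3a} into the Gauss-type theta series $\sum (-q)^{n(n+1)/2}$. Once that matching is carried out, everything else is a routine appeal to Theorems~\ref{cm} and \ref{thm2.1}, Lemma~\ref{ewell}, and Euler's identity.
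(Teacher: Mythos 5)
Your proposal is correct and follows essentially the same route as the paper: direct enumeration of mex-$k$ partitions for \eqref{o1a} and \eqref{o3a}, the sum handled via the nonnegative-crank connection and Theorem~\ref{thm2.1} with $j=0$, and the difference reassembled into $\sum_{n\ge 0}(-q)^{n(n+1)/2}$, then Lemma~\ref{ewell} and Euler's identity giving $\sum_{n\ge 0} q^{n^2+n}/(q^2;q^2)_n$. The mod-$4$ sign and exponent bookkeeping in your difference step does check out (it is the paper's $\sum_k(-1)^k q^{k(2k+1)}(1-q^{2k+1})$ written term by term), so no gaps remain.
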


\begin{proof}
The generating functions \eqref{o1a} and \eqref{o3a} follow directly from the definitions of $m_{1,4}(n)$ and $m_{3,4}(n)$, respectively.

The verification of the other generating functions uses Theorem~\ref{thm2.1} and Lemma~\ref{ewell}:
\begin{align*}
\sum_{n\ge 0} (m_{1,4}(n)+m_{3,4}(n)) q^n & = \sum_{n\ge 0} m_{1,2}(n) q^n \\
& = \frac{1}{(q;q)_{\infty}} \sum_{k \ge 0} q^{k(2k+1)}(1-q^{2k+1}) \\
& =\sum_{n\ge 0} \frac{q^{n(n+1)}}{(q;q)_n^2}, \\
\sum_{n\ge 0} (m_{1,4}(n)-m_{3,4}(n)) q^n&= \frac{1}{(q;q)_{\infty}} \sum_{k \ge 0} (-1)^k q^{k(2k+1)}(1-q^{2k+1}) \\
&=\frac{1}{(q;q)_{\infty}} \sum_{k\ge 0} (-q)^{k(k+1)/2}\\
&=(-q^2;q^2)_{\infty} \\
&=\sum_{n\ge 0} \frac{q^{n^2+n}}{(q^2;q^2)_n},
\end{align*}
The expressions \eqref{o1b} and \eqref{o3b} follow.
\end{proof}

The following result \cite[Proposition 9]{hss} connects $m_{1,4}(n)$ and $m_{3,4}(n)$.

\begin{proposition}[Hopkins, Sellers, Stanton] \label{o13}
For any $n\ge 1$,
\[m_{1,4}(n) = \begin{cases} m_{3,4}(n) & \text{if $n$ is odd,} \\ m_{3,4}(n) + q(n/2) & \text{if $n$ is even.} \end{cases} \]
\label{comb}
\end{proposition}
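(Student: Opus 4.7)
The plan is to read off the result directly from the alternating generating function already computed in the proof of the previous proposition.

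First, I would extract from the preceding proof the identity
\[
\sum_{n\ge 0} \bigl(m_{1,4}(n)-m_{3,4}(n)\bigr)\,q^n \;=\;(-q^2;q^2)_{\infty}.
\]
This is established en route in the previous proposition as the signed combination of the $m_{1,4}$ and $m_{3,4}$ generating functions, after invoking Lemma~\ref{ewell} to collapse the alternating sum. So the whole identity to prove becomes a statement about the coefficients of $(-q^2;q^2)_{\infty}$.

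Next I would give the standard combinatorial reading of the product:
\[
(-q^2;q^2)_{\infty}=\prod_{k\ge 1}(1+q^{2k})
\]
is the generating function for partitions into distinct even parts. Any such partition has even weight, and the natural bijection that halves every part sends partitions of $2m$ into distinct even parts to partitions of $m$ into distinct parts. Hence the coefficient of $q^n$ in $(-q^2;q^2)_{\infty}$ equals $0$ when $n$ is odd and $q(n/2)$ when $n$ is even.

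Comparing coefficients of $q^n$ on both sides then gives exactly the two cases in the statement: $m_{1,4}(n)-m_{3,4}(n)=0$ for odd $n$, and $m_{1,4}(n)-m_{3,4}(n)=q(n/2)$ for even $n$. There is no real obstacle here; the only subtlety is noticing that the alternating combination $m_{1,4}-m_{3,4}$ has already been summed in closed form inside the preceding proof, so this proposition is essentially a coefficient-extraction corollary. If a more combinatorial flavor were wanted, one could replace the generating function step with a direct sign-reversing involution on triples arising from Theorem~\ref{thm2.1} (as in the combinatorial proof of Lemma~\ref{lem2.2}), but the analytic route above is the shortest.
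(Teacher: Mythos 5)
Your argument is correct, but it is not the route the paper takes for this proposition. You extract the identity $\sum_{n\ge 0}\bigl(m_{1,4}(n)-m_{3,4}(n)\bigr)q^n=(-q^2;q^2)_{\infty}$, which is indeed displayed (via Lemma~\ref{ewell}) in the proof of the proposition establishing \eqref{o1b} and \eqref{o3b}, and then you finish by reading off coefficients: $(-q^2;q^2)_{\infty}$ generates partitions into distinct even parts, so its coefficient of $q^n$ is $0$ for odd $n$ and $q(n/2)$ for even $n$ (halve every part). That is a complete and valid proof, and in fact the shortest one available once \eqref{o1a}--\eqref{o3b} are in hand. The paper instead deliberately derives Proposition~\ref{o13} as a corollary of the finer Theorem~\ref{4ways}, which refines the statement by the parity of the number of parts: there one computes the length-signed generating functions (substituting $-1$ for the length variable, giving the extra factor $1/(-q;q)_{\infty}$ and the companion series $(q^2;q^2)_{\infty}$), obtains $m_{1,4}^o(n)=m_{3,4}^e(n)+q^o(n/2)$ and $m_{1,4}^e(n)=m_{3,4}^o(n)+q^e(n/2)$ in the even case, and then adds the two equations. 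What your route buys is economy: it needs only Lemma~\ref{ewell} and coefficient extraction, with no length statistics at all. What the paper's route buys is the stronger refined statement (Theorem~\ref{4ways}), from which Proposition~\ref{o13}, Corollary~\ref{oe}, and the parity theorem of Andrews and Newman all follow by simple rearrangement; your proof does not yield those refinements. Your closing remark about an alternative sign-reversing involution is an unsubstantiated aside, but nothing in your main argument depends on it.
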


A different proof than the one given in \cite{hss} will follow from Theorem~\ref{4ways} as detailed below.

Here, we consider further refinements of these statistics incorporating the parity of partition length.  A superscript $o$ denotes the number of designated partitions with odd length, similarly a superscript $e$ for even length.  For instance, $m_{3,4}^o(n)$ is the number of partitions of $n$ with mex congruent to 3 modulo 4 and an odd number of parts, while $q^e(n)$ is the number of partitions of $n$ into an even number of distinct parts.
Values of some of these statistics for small values of $n$ are given in Table \ref{tab}.  The $m_{1,2}(n)$ sequence matches \cite[A064428]{o}; at the time of writing, no other rows are currently in that encyclopedia. 

\begin{table}
\caption{Values of various refined odd mex statistics for small $n$.
}
{\renewcommand{\arraystretch}{1.5}
\begin{tabular}{c|cccccccccccccc}
$n$ & 2 & 3 & 4 & 5 & 6 & 7 & 8 & 9 & 10 & 11 & 12 & 13 & 14 & 15 \\ \hline \hline		
$m_{1,2}(n)$ &  1 & 2 & 3 & 4 & 6 & 8 & 12 & 16 & 23 & 30 & 42 & 54 & 73 & 94  \\ \hline
$m_{1,4}(n)$ & 1 & 1 & 2 & 2 & 4 & 4 & 7 & 8 & 13 & 15 & 23 & 27 & 39 & 47  \\
$m_{3,4}(n)$ & 0 & 1 & 1 & 2 & 2 & 4 & 5 & 8 & 10 & 15 & 19 & 27 & 34 & 47  \\ \hline
$m_{1,2}^o(n)$ &  1 & 1 & 2 & 2 & 3 & 4 & 6 & 8 & 11 & 15 & 21 & 27 & 36 & 47 \\
$m_{1,2}^e(n)$ & 0 & 1 &  1 & 2 & 3 & 4 & 6 & 8 & 12 & 15 & 21 & 27 & 37 & 47\\ \hline
$m_{1,4}^o(n)$ & 1 & 1 & 1 & 1 & 2 & 2 & 3 & 4 & 6 & 8 & 11 & 14 & 19 & 24\\
$m_{1,4}^e(n)$ & 0 & 0 & 1 & 1 & 2 & 2 & 4 & 4 & 7 & 7 & 12 & 13 & 20 & 23 \\
$m_{3,4}^o(n)$ & 0 & 0 & 1 & 1 & 1 & 2 & 3 & 4 & 5 & 7 & 10 & 13 & 17 & 23 \\
$m_{3,4}^e(n)$ & 0 & 1 & 0 & 1 & 1 & 2 & 2 & 4 & 5 & 8 & 9 & 14 & 17 & 24 \\
\end{tabular}}
\label{tab}
\end{table}
Several relations between these statistics follow by definition:
\begin{gather*}
m_{1,2}(n) = m_{1,4}(n) + m_{3,4}(n) = m_{1,2}^o(n) + m_{1,2}^e(n), \\
m_{1,4}(n) = m_{1,4}^o(n) + m_{1,4}^e(n),  \\
m_{3,4}(n) = m_{3,4}^o(n) + m_{3,4}^e(n), \\
m_{1,2}^o(n) = m_{1,4}^o(n) + m_{3,4}^o(n), \\
m_{1,2}^e(n) = m_{1,4}^e(n) + m_{3,4}^e(n).
\end{gather*}

We prove another relation between these statistics that will simplify several previous results and serves as a natural refinement of Proposition~\ref{o13}.  

\begin{theorem} \label{4ways}
For any $n\ge 1$, 
\begin{align*}
m_{1,4}^o(n) = \begin{cases} m_{3,4}^e(n) & \text{if $n$ is odd,} \\ m_{3,4}^e(n) + q^o(n/2) & \text{if $n$ is even;} \end{cases} \\
m_{1,4}^e(n) = \begin{cases} m_{3,4}^o(n) & \text{if $n$ is odd,} \\ m_{3,4}^o(n) + q^e(n/2) & \text{if $n$ is even.} \end{cases}
\end{align*}
\end{theorem}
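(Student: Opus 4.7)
My plan is to reduce Theorem~\ref{4ways} to a single generating-function identity complementing Proposition~\ref{o13}. Set $A(n) = m_{1,4}^o(n) - m_{3,4}^e(n)$ and $B(n) = m_{1,4}^e(n) - m_{3,4}^o(n)$, so the theorem's two claims are exactly the computations of the generating functions of $A$ and $B$. Observe that $A + B = m_{1,4} - m_{3,4}$, whose generating function is $(-q^2;q^2)_\infty$ by Proposition~\ref{o13}, while $A - B = (m_{1,4}^o + m_{3,4}^o) - (m_{1,4}^e + m_{3,4}^e) = m_{1,2}^o - m_{1,2}^e$. So the theorem reduces to the single new identity
\[ \sum_{n \ge 0} \bigl(m_{1,2}^e(n) - m_{1,2}^o(n)\bigr)\, q^n = (q^2;q^2)_\infty. \]

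To establish this identity, I introduce a length-tracking variable $z$ into the mex generating function. A partition with mex exactly $m$ decomposes into one forced copy of each of $1, 2, \ldots, m-1$ (contributing $z^{m-1} q^{\binom{m}{2}}$) plus an arbitrary partition whose parts avoid $m$ (with generating function $(1 - zq^m)/(zq;q)_\infty$). Specializing $z = -1$ weights each partition by $(-1)^{\ell(\lambda)}$, and summing over odd mex $m = 2k+1$ yields
\[ \sum_{n \ge 0} \bigl(m_{1,2}^e(n) - m_{1,2}^o(n)\bigr)\, q^n = \frac{1}{(-q;q)_\infty} \sum_{k \ge 0} q^{k(2k+1)}\bigl(1 + q^{2k+1}\bigr). \]

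The key observation is that the inner sum is exactly Gauss's triangular theta series. Indeed, for even $n = 2k$ we have $n(n+1)/2 = k(2k+1)$, while for odd $n = 2k+1$ we have $n(n+1)/2 = (k+1)(2k+1) = k(2k+1) + (2k+1)$, so
\[ \sum_{k \ge 0} q^{k(2k+1)}\bigl(1 + q^{2k+1}\bigr) = \sum_{n \ge 0} q^{n(n+1)/2} = \frac{(q^2;q^2)_\infty}{(q;q^2)_\infty} \]
by Gauss's classical product formula for $\psi(q)$. Combined with Euler's identity $(-q;q)_\infty = 1/(q;q^2)_\infty$, the right-hand side of the preceding display collapses to $(q^2;q^2)_\infty$, as required. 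Adding and subtracting this against $(-q^2;q^2)_\infty$ and dividing by two then gives
\[ \sum_{n \ge 0} A(n)\, q^n = \sum_{n \ge 0} q^o(n)\, q^{2n}, \qquad \sum_{n \ge 0} B(n)\, q^n = \sum_{n \ge 0} q^e(n)\, q^{2n}, \]
after expanding $\prod_{k\ge 1}(1 \pm q^{2k})$ as signed generating functions for partitions into distinct even parts. The main obstacle is really just spotting the reformulation of the inner $q$-series as $\psi(q)$; the length-tracked mex generating function, the $z=-1$ specialization, and the final two-by-two linear algebra in $A$ and $B$ are all routine.
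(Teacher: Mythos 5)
Your argument is correct, and it checks out numerically against Table~\ref{tab}, but it is organized differently from the paper's proof, so a comparison is worthwhile. The paper works directly with the four parity-refined generating functions for mex $\equiv 1$ and $3 \pmod 4$ (obtained from the $z=\pm 1$ specializations of the length-tracked mex generating function, exactly as in your decomposition $z^{m-1}q^{\binom{m}{2}}(1-zq^m)/(zq;q)_\infty$), then forms $2\bigl(m_{1,4}^e(n)-m_{3,4}^o(n)\bigr)$ and $2\bigl(m_{1,4}^o(n)-m_{3,4}^e(n)\bigr)$ and evaluates the resulting triangular-number series as $(-q^2;q^2)_\infty \pm (q^2;q^2)_\infty$ via Lemma~\ref{ewell} and the Gauss-type evaluation $\frac{1}{(-q;q)_\infty}\sum_{k\ge 0} q^{k(k+1)/2}=(q^2;q^2)_\infty$; your key step is literally this last evaluation, since $\sum_{k\ge0}q^{k(2k+1)}(1+q^{2k+1})=\sum_{n\ge0}q^{n(n+1)/2}$, so the analytic ingredients largely coincide. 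What you do differently is the bookkeeping: you reduce to the two combinations $A\pm B$, import the evaluation of $A+B$ from Proposition~\ref{o13}, and prove the evaluation of $A-B$ directly --- note that $\sum_n\bigl(m_{1,2}^e(n)-m_{1,2}^o(n)\bigr)q^n=(q^2;q^2)_\infty$ is precisely the generating-function form of Corollary~\ref{oe}, so you invert the paper's logical order, which deduces Corollary~\ref{oe} from Theorem~\ref{4ways}. One caveat: the paper uses Theorem~\ref{4ways} to give a new proof of Proposition~\ref{o13}, so within the paper's architecture your appeal to Proposition~\ref{o13} would render that later argument circular; your proof of the theorem itself is nonetheless valid because Proposition~\ref{o13} has an independent proof in \cite{hss}, and the dependence is easily removed by replacing it with the identity $\sum_n\bigl(m_{1,4}(n)-m_{3,4}(n)\bigr)q^n=(-q^2;q^2)_\infty$, which the paper derives from Lemma~\ref{ewell} in the course of proving \eqref{o1b} and \eqref{o3b}.
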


\begin{proof}
By the definitions, we see that
\begin{align*}
\sum_{n\ge 0} \left(m_{1,4}^e(n) +m_{1,4}^o(n)\right) q^n&= \frac{1}{(q;q)_{\infty}} \sum_{k\ge 0} q^{2k(4k+1)}(1-q^{4k+1}),\\
\sum_{n\ge 0} \left(m_{1,4}^e(n) -m_{1,4}^o(n)\right) q^n&=\frac{1}{(-q;q)_{\infty}} \sum_{k\ge 0} q^{2k(4k+1)}(1+q^{4k+1}),
\end{align*}
and
\begin{align*}
\sum_{n\ge 0} \left(m_{3,4}^o(n) +m_{3,4}^e(n)\right) q^n&= \frac{1}{(q;q)_{\infty}} \sum_{k\ge 0} q^{(2k+1)(4k+3)}(1-q^{4k+3}),\\
\sum_{n\ge 0} \left(m_{3,4}^o(n) -m_{3,4}^e(n)\right) q^n&=\frac{1}{(-q;q)_{\infty}} \sum_{k\ge 0} q^{(2k+1)(4k+3)}(1+q^{4k+3}).
\end{align*}
Thus,
\begin{align*}
\sum_{n\ge 0} 2 \left(m_{1,4}^e(n)-m_{3,4}^o(n)\right) q^n &=\frac{1}{(q;q)_{\infty}}\sum_{k\ge 0} (-q)^{k(k+1)/2}+\frac{1}{(-q;q)_{\infty}}  \sum_{k\ge 0} q^{k(k+1)/2} \\
&=(-q^2;q^2)_{\infty} +(q^2;q^2)_{\infty}  \\
&=\sum_{n\ge 0} 2 q^e(n/2)q^n,
\end{align*}
applying Lemma~\ref{ewell} for the second equality.  Similarly,
\begin{align*}
\sum_{n\ge 0} 2 (m_{1,4}^o(n)-m_{3,4}^e(n)) q^n &=\frac{1}{(q;q)_{\infty}}\sum_{k\ge 0} (-q)^{k(k+1)/2}-\frac{1}{(-q;q)_{\infty}}  \sum_{k\ge 0} q^{k(k+1)/2} \\
&=(-q^2;q^2)_{\infty} - (q^2;q^2)_{\infty}  \\
&=\sum_{n\ge 0} 2q^o(n/2)q^n. \qedhere
\end{align*}
\end{proof}

Next, we give the more direct proof of Proposition~\ref{o13}.

\begin{proof}[Proof of Proposition~\ref{o13}.]  
Rearranging the results of Theorem~\ref{4ways} gives
\begin{align*}
m_{1,4}(n) & = m_{1,4}^o(n) + m_{1,4}^e(n) \\
& = \begin{cases} m_{3,4}^e(n) + m_{3,4}^o(n) & \text{if $n$ is odd,} \\ m_{3,4}^e(n) + q^o(n/2) + m_{3,4}^o(n) + q^e(n/2) & \text{if $n$ is even} \end{cases} \\
& = \begin{cases} m_{3,4}(n) & \text{if $n$ is odd,} \\ m_{3,4}(n) + q(n/2) & \text{if $n$ is even.} \end{cases} \qedhere
\end{align*}
\end{proof}

Theorem~\ref{4ways} also leads to the following relation between $m_{1,2}^o(n)$ and $m_{1,2}^e(n)$; we provide both analytic and combinatorial proofs.

\begin{corollary} \label{oe}
\begin{equation*}
m_{1,2}^o(n)=\begin{cases} m_{1,2}^e(n)+(-1)^{m+1} & \text{ when $n=m(3m\pm 1)$},\\
m_{1,2}^e(n) & \text{ otherwise}.
\end{cases}
\end{equation*}
\end{corollary}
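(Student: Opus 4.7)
The plan is to reduce the difference $m_{1,2}^o(n)-m_{1,2}^e(n)$ to a signed count of partitions into distinct parts via Theorem~\ref{4ways}, and then invoke Euler's pentagonal number theorem. Starting from the definitional decompositions $m_{1,2}^o = m_{1,4}^o + m_{3,4}^o$ and $m_{1,2}^e = m_{1,4}^e + m_{3,4}^e$, I would rearrange to write
\[ m_{1,2}^o(n) - m_{1,2}^e(n) = \bigl(m_{1,4}^o(n) - m_{3,4}^e(n)\bigr) - \bigl(m_{1,4}^e(n) - m_{3,4}^o(n)\bigr). \]
Theorem~\ref{4ways} evaluates each parenthesized difference: both vanish when $n$ is odd, and for even $n$ they equal $q^o(n/2)$ and $q^e(n/2)$, respectively. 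So for odd $n$ the desired quantity is $0$, and for even $n$ it simplifies cleanly to $q^o(n/2) - q^e(n/2)$.

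For the analytic half of the proof, I would then recall that $(q;q)_\infty = \sum_{n\ge 0}(q^e(n)-q^o(n))q^n$ and apply Euler's pentagonal number theorem, which gives $(q;q)_\infty = 1 + \sum_{k\ge 1}(-1)^k\bigl(q^{k(3k-1)/2}+q^{k(3k+1)/2}\bigr)$. Reading off coefficients, $q^e(n/2)-q^o(n/2) = (-1)^m$ precisely when $n/2 = m(3m\pm1)/2$, i.e.\ $n=m(3m\pm 1)$, and is $0$ otherwise. Negating gives $m_{1,2}^o(n)-m_{1,2}^e(n)=(-1)^{m+1}$ at the pentagonal inputs, exactly as claimed. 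Alternatively, one can streamline by subtracting the two generating-function identities derived inside the proof of Theorem~\ref{4ways}: the sum of those two lines yields $-(q^2;q^2)_\infty$ as the generating function for $m_{1,2}^o(n)-m_{1,2}^e(n)$, after which pentagonal-number extraction finishes the argument.

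For the combinatorial half, once Theorem~\ref{4ways} has localized the difference as $q^o(n/2)-q^e(n/2)$ for even $n$, I would apply Franklin's classical sign-reversing involution on partitions of $n/2$ into distinct parts. Franklin's map exchanges the smallest part with the length of the longest ``staircase'' running from the largest part, changing the parity of the number of parts, and is well-defined except on a pair of staircases at $m(3m\pm 1)/2$, which survive as the single unmatched partition of sign $(-1)^m$. Transporting that sign back through Theorem~\ref{4ways} produces the $(-1)^{m+1}$ in the statement.

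The main potential obstacle is making the combinatorial argument genuinely live at the level of partitions of $n$ (rather than $n/2$): that would require an explicit bijective proof of Theorem~\ref{4ways}, pairing partitions of $n$ with odd mex of the appropriate residue and length parity against distinct-part partitions of $n/2$. The excerpt supplies only an analytic proof of Theorem~\ref{4ways}, so I would be content to treat the reduction to $q^o(n/2)-q^e(n/2)$ as a black box and run Franklin on the resulting distinct-part partitions, flagging the full bijective lift as a natural next question.
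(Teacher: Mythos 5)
Your proposal is correct and its core argument coincides with the paper's analytic proof: both reduce $m_{1,2}^o(n)-m_{1,2}^e(n)$ via Theorem~\ref{4ways} to $q^o(n/2)-q^e(n/2)$ for even $n$ (zero for odd $n$) and finish with Euler's pentagonal number theorem, your ``streamlined'' variant being just the generating-function form of the same computation. Note, though, that your ``combinatorial half'' is not an independent combinatorial proof, since (as you acknowledge) it rests on the analytically proved Theorem~\ref{4ways} and only makes the pentagonal step explicit via Franklin on partitions of $n/2$; the paper's separate combinatorial proof instead works directly with the generating-function identity $\sum_{k\ge 0} q^{k(2k+1)}(1+q^{2k+1})/(-q;q^2)_{\infty}=(q^2;q^2)_{\infty}$, using Carlitz's theta identity (Lemma~\ref{car}), a sign-reversing involution on triples of partitions, and only then Franklin's bijection applied to the halved distinct even parts.
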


\begin{proof}[Analytic proof of Corollary~\ref{oe}]
Rearranging the second equation of Theorem~\ref{4ways} as
\[m_{3,4}^o(n) = \begin{cases} m_{1,4}^e(n) & \text{if $n$ is odd,} \\ m_{1,4}^e(n) - q^e(n/2) & \text{if $n$ is even} \end{cases}\]
allows us to write
\begin{align*}
m_{1,2}^o(n) & = m_{1,4}^o(n) + m_{3,4}^o(n) \\
& = \begin{cases} m_{3,4}^e(n) + m_{1,4}^e(n) & \text{if $n$ is odd,} \\ m_{3,4}^e(n) + q^o(n/2) + m_{1,4}^e(n) - q^e(n/2) & \text{if $n$ is even} \end{cases} \\
& = \begin{cases} m_{1,2}^e(n) & \text{if $n$ is odd,} \\ m_{1,2}^e(n) + q^o(n/2) - q^e(n/2) & \text{if $n$ is even} \end{cases}
\end{align*}
and the result follows from Euler's pentagonal number theorem.
\end{proof}

Our combinatorial proof of Corollary~\ref{oe} uses an equivalent generating function formulation.  One step incorporates the following result of Carlitz \cite{c}.

\begin{lemma}[Carlitz] \label{car}  We have
\[ \prod_{n=1}^\infty (1-x^ny^n)(1+x^ny^{n-1})(1+x^{n-1}y^n) = \sum_{n=-\infty}^{\infty} x^{n(n+1)/2} y^{n(n-1)/2}.\]
\end{lemma}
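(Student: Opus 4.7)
The plan is to derive Carlitz's identity as a direct specialization of the Jacobi triple product identity
\[ \prod_{n=1}^{\infty} (1-q^{2n})(1+q^{2n-1}z)(1+q^{2n-1}z^{-1}) = \sum_{k=-\infty}^{\infty} z^k q^{k^2}. \]
The substitution to make is $q = (xy)^{1/2}$ and $z = (x/y)^{1/2}$; although these introduce half-integer exponents as intermediate quantities, every term on both sides will clean up to integer powers of $x$ and $y$.

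The first step is a factor-by-factor verification on the product side. Under the substitution, $q^{2n} = (xy)^n$, so the first factor becomes $1 - x^n y^n$. For the remaining two factors, compute
\[ q^{2n-1} z = (xy)^{n-1/2}(x/y)^{1/2} = x^n y^{n-1}, \qquad q^{2n-1} z^{-1} = (xy)^{n-1/2}(y/x)^{1/2} = x^{n-1} y^n, \]
which match the second and third factors on the left-hand side of Carlitz's identity exactly.

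The second step is to simplify the right-hand side. The general summand becomes
\[ z^k q^{k^2} = (x/y)^{k/2} (xy)^{k^2/2} = x^{k(k+1)/2} y^{k(k-1)/2}, \]
agreeing with the $n = k$ term in Carlitz's sum. Reindexing over all integers $k$ finishes the identification.

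The main obstacle is essentially stylistic rather than mathematical: how much to assume about the Jacobi triple product itself. Since the paper favors combinatorial arguments, I would ideally preface the substitution with a short sketch of a bijective proof of JTP (for instance, via Wright's correspondence between signed pairs of partitions and square-plus-staircase configurations) so that Carlitz's lemma becomes fully combinatorial. However, because this lemma is invoked purely as an algebraic tool in the proof of Corollary~\ref{oe}, citing JTP from a standard reference and then carrying out the substitution above seems sufficient and keeps the exposition compact.
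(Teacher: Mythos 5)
Your derivation is correct: the substitution $q^2 = xy$, $z = (x/y)^{1/2}$ into the Jacobi triple product does produce exactly Carlitz's identity, and all the exponent computations check out. Note, though, that the paper itself gives no proof of this lemma at all --- it is stated with a citation to Carlitz, and the authors merely remark that Wright \cite{w} supplied a combinatorial verification (which matters to them because Lemma~\ref{car} feeds into the combinatorial proof of Corollary~\ref{oe}); so you are supplying an argument where the paper supplies a reference. One small improvement: you can avoid the half-integer exponents entirely by starting from the asymmetric form of the triple product, $\prod_{n\ge 1}(1-q^n)(1+zq^n)(1+z^{-1}q^{n-1}) = \sum_{n=-\infty}^{\infty} z^n q^{n(n+1)/2}$, and substituting $q = xy$, $z = y^{-1}$; then every intermediate quantity is already a Laurent monomial in $x$ and $y$ and no appeal to Puiseux-type series is needed. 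As it stands, your version should at least say in what ring the fractional-power manipulation is taking place (e.g., formal series in $x^{1/2}, y^{1/2}$, observing at the end that both sides lie in $\mathbb{Z}[[x,y]]$), but this is a cosmetic rather than substantive gap.
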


The right-hand side of this identity is now known as Ramanujan's theta series.  In terms of producing a purely combinatorial argument for Corollary~\ref{oe}, note that Wright \cite{w} gave a combinatorial verification of Lemma~\ref{car}.

\begin{proof}[Combinatorial proof of Corollary~\ref{oe}.]
From the definitions of $m^o_{1,2}(n)$ and $m^e_{1,2}(n)$, a generating function statement of Corollary~\ref{oe} is
$$
\sum_{k\ge 0} \frac{q^{k(2k+1)}}{(-q;q^2)_k (-q^{2k+3};q^2)_{\infty}} = (q^2;q^2)_{\infty}
$$
which is equivalent to 
$$
\frac{1}{(-q;q^2)_{\infty}}  \sum_{k\ge 0} q^{k(2k+1)} (1+q^{2k+1}) =   (q^2;q^2)_{\infty}.
$$
Note that the left-hand side can be rewritten
 \begin{equation}
\frac{1}{(-q;q^2)_{\infty}}  \sum_{k\ge 0} q^{k(2k+1)} (1+q^{2k+1}) =\frac{(q^2;q^2)_{\infty}}{ (-q;q^2)_{\infty}}  \frac{1}{(q^4;q^4)_{\infty} }\sum_{k =-\infty}^{\infty} q^{k(2k+1)} . \label{eq1}
\end{equation}
Now Lemma~\ref{car} with $x = q^3$ and $y = q$ gives
$$
 \frac{1}{(q^4;q^4)_{\infty} }\sum_{k =-\infty}^{\infty} q^{k(2k+1)} =\prod_{n=1}^\infty (1+q^{4n-1})(1+q^{4n-3}) =  \sum_{\nu} q^{|\nu|}
 $$
where the last sum is over all partitions into distinct odd parts. Thus, the right-hand side of \eqref{eq1} is the weighted generating function of triples $(\pi; \mu; \nu)$ where
$\pi$ is a partition into distinct even parts,
$\mu$ is a partition into odd parts, and
$\nu$ is a partition into distinct odd parts.
That is,
$$
\frac{(q^2;q^2)_{\infty}}{ (-q;q^2)_{\infty}}  \frac{1}{(q^4;q^4)_{\infty} }\sum_{k =-\infty}^{\infty} q^{k(2k+1)} =\sum_{(\pi; \mu; \nu)} (-1)^{\ell(\pi)+\ell(\mu)} q^{|\pi|+|\mu|+|\nu|}.
$$

To complete the proof, we cancel out many of the triples $(\pi; \mu; \nu)$.

Let $x$ be the smallest part of $\mu$ and $y$ the smallest part of $\nu$. If $x<y$, then move $x$ to $\nu$. Otherwise, move $y$ to $\mu$. This is clearly a sign reversing involution with no fixed points. Thus, after cancellations, we are left with only $\pi$. 

Now apply Franklin's bijection \cite[Theorem 1.6]{ab} to $\pi_1/2, \ldots, \pi_r/2$ for each $\pi$.  This cancels out partitions into an even number of parts and an odd number of parts except for exactly one partition $\pi$ of $n$ when $n$ is twice a generalized pentagonal number.
\end{proof}

Corollary~\ref{oe} gives the most succinct proof yet of \cite[Theorem 1.2]{an19}, that $m_{1,2}(n)$ is almost always even (see also \cite[Theorem 10]{hss}).

\begin{theorem}[Andrews, Newman]
$m_{1,2}(n)$ is almost always even and is odd exactly when $n=m(3m\pm1)$ for some $m$.
\end{theorem}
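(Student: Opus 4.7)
The plan is to extract the parity directly from Corollary~\ref{oe}. Every partition counted by $m_{1,2}(n)$ has either an odd or an even number of parts, so
\[m_{1,2}(n) = m_{1,2}^o(n) + m_{1,2}^e(n).\]
Substituting the formula of Corollary~\ref{oe} into the right-hand side yields
\[m_{1,2}(n) = 2 m_{1,2}^e(n) + \varepsilon(n),\]
where $\varepsilon(n) = (-1)^{m+1}$ when $n = m(3m \pm 1)$ for some positive integer $m$, and $\varepsilon(n) = 0$ otherwise.

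The parity of $m_{1,2}(n)$ is therefore governed entirely by the correction term $\varepsilon(n)$. When $n$ is not of the form $m(3m\pm 1)$, one has $m_{1,2}(n) = 2 m_{1,2}^e(n)$, which is manifestly even. When $n = m(3m\pm 1)$, the correction is $\pm 1$ and so $m_{1,2}(n)$ is odd. Since the numbers $m(3m\pm 1)$ for $m \ge 1$ are precisely twice the generalized pentagonal numbers, they form a set of natural density zero, so $m_{1,2}(n)$ is almost always even and is odd on exactly this sparse exceptional set.

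There is essentially no obstacle to this argument, since Corollary~\ref{oe} has already absorbed all of the substantive content. The only bookkeeping worth verifying is that the values $m(3m+1)$ and $m'(3m'-1)$ for $m, m' \ge 1$ are pairwise distinct, so that the sign $(-1)^{m+1}$ assigned to $\varepsilon(n)$ is unambiguous; this follows immediately from the standard enumeration of generalized pentagonal numbers. Hence the theorem is a short parity corollary of Corollary~\ref{oe}.
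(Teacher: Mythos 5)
Your argument is correct and is essentially identical to the paper's proof: both simply write $m_{1,2}(n) = m_{1,2}^o(n) + m_{1,2}^e(n)$ and apply Corollary~\ref{oe} to obtain $2m_{1,2}^e(n)$ plus the exceptional term $(-1)^{m+1}$ when $n = m(3m\pm 1)$. Nothing further is needed.
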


\begin{proof}
Using Corollary~\ref{oe}, 
\begin{align*}
m_{1,2}(n) & = m_{1,2}^o(n) + m_{1,2}^e(n) \\
& =\begin{cases} 2m_{1,2}^e(n)+(-1)^{m+1} & \text{ when $n=m(3m\pm 1)$},\\
2m_{1,2}^e(n) & \text{ otherwise}. \end{cases} \qedhere
\end{align*}
\end{proof}

Note that, for odd $n$, many of the statistics introduced in this section are equal.  Specifically,
\begin{equation}
m_{1,4}(2k+1) = m_{3,4}(2k+1) = m_{1,2}^o(2k+1) = m_{1,2}^e(2k+1) \label{oddstats}
\end{equation}
for all integers $k \ge 0$.  It would be nice to have combinatorial proofs of the identities in \eqref{oddstats}.

\subsection{Connecting odd mex and nonpositive crank}
Given Theorem~\ref{cm} which connects partitions with odd mex and partitions with nonnegative crank, the split of the odd mex partitions leads to a natural question, posed in \cite{hss}: Which partitions of $n$ with nonnegative crank correspond to the partitions counted by $m_{1,4}(n)$, and which to those counted by $m_{3,4}(n)$?  This was answered recently by Huh and Kim \cite{hk}, whose Proposition 3.4 is the even case of the following theorem.

Let $M_{\le 0}(n)$ be the number of partitions $\lambda$ of $n$ with $\crank(\lambda) \le 0$.  Using this notation, we know from Theorem~\ref{cm} and \eqref{cranksymmetry} that $$M_{\le 0}(n) = m_{1,2}(n).$$  Let $M^e_{\le 0}(n)$ be the number of partitions $\lambda$ of $n$ with $\crank(\lambda) \le 0$ having even length, similarly $M^o_{\le 0}(n)$ for odd length.

\begin{theorem}[Huh, Kim] \label{newo1o3}
We have $M^e_{\le 0}(n) = m_{1,4}(n)$ and $M^o_{\le 0}(n) = m_{3,4}(n)$.
\end{theorem}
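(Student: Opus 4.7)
The plan is to determine both the sum and the difference of the generating functions of $M^e_{\le 0}(n)$ and $M^o_{\le 0}(n)$ and to compare them with the corresponding combinations for $m_{1,4}(n)$ and $m_{3,4}(n)$. The sum comes for free: by Theorem~\ref{cm} and \eqref{cranksymmetry},
\[
M^e_{\le 0}(n) + M^o_{\le 0}(n) = M_{\le 0}(n) = m_{1,2}(n) = m_{1,4}(n) + m_{3,4}(n).
\]
So the whole task reduces to establishing
\[
\sum_{n \ge 0} \bigl(M^e_{\le 0}(n) - M^o_{\le 0}(n)\bigr)\, q^n = (-q^2;q^2)_{\infty},
\]
since the right-hand side has already been identified with $\sum_n (m_{1,4}(n) - m_{3,4}(n))\, q^n$ inside the proof of Theorem~\ref{4ways}.

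To produce this difference generating function I would revisit the combinatorial decomposition used in the proof of Theorem~\ref{thm2.1} at $j = 0$. There, a partition $\lambda$ with $\crank(\lambda) \le 0$ is encoded by three pieces: (i) a Durfee square of size $d \times d$; (ii) an east component, a partition with at most $d$ parts that records the overhang to the right of the square and therefore adds only columns, not rows, to the diagram of $\lambda$; and (iii) a south component situated below the square, a partition with all parts at most $d$ and at least $d$ ones. Peeling off the forced $d$ ones from (iii) leaves a residual partition $\rho$ with parts $\le d$ and no constraint on length. Because only the south component contributes additional rows to $\lambda$,
\[
\ell(\lambda) \;=\; d \;+\; \bigl( d + \ell(\rho) \bigr) \;=\; 2d + \ell(\rho),
\]
so $(-1)^{\ell(\lambda)} = (-1)^{\ell(\rho)}$.

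Weighting $\rho$ by $(-1)^{\ell(\rho)}$ replaces its generating function $1/(q;q)_d$ by $\prod_{k=1}^{d} 1/(1+q^k) = 1/(-q;q)_d$, while the Durfee square, the forced ones, and the east part still contribute $q^{d^2+d}/(q;q)_d$ exactly as in the proof of Theorem~\ref{thm2.1}. Using $(q;q)_d(-q;q)_d = (q^2;q^2)_d$ together with the Euler-type evaluation $\sum_{d \ge 0} q^{d(d+1)}/(q^2;q^2)_d = (-q^2;q^2)_{\infty}$ already invoked in the proof of Theorem~\ref{4ways}, summing over $d$ gives
\[
\sum_{n \ge 0} \bigl(M^e_{\le 0}(n) - M^o_{\le 0}(n)\bigr)\, q^n = \sum_{d \ge 0} \frac{q^{d(d+1)}}{(q;q)_d\,(-q;q)_d} = (-q^2;q^2)_{\infty},
\]
which is exactly what is required; solving the sum and difference system against those of $m_{1,4}$ and $m_{3,4}$ then yields the two claimed identities.

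The main obstacle I anticipate is verifying the length bookkeeping rigorously within the bijection underlying Theorem~\ref{thm2.1}: one must check carefully that the east partition genuinely adds only columns and leaves $\ell(\lambda)$ unchanged, while every part of the south component contributes exactly one new row of $\lambda$, so that the parity of $\ell(\lambda)$ coincides with that of $\ell(\rho)$. Once this is settled the rest is the short series manipulation above and an appeal to the identities already developed in Section~\ref{omex}.
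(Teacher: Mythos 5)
Your proposal is correct and follows essentially the same route as the paper: the paper introduces the length-refined generating function $\sum_{n\ge 0} z^{2n}q^{n(n+1)}/((zq;q)_n(q;q)_n)$ coming from the same $j=0$ Durfee-square decomposition (with the $z^{2n}$ recording exactly your $2d+\ell(\rho)$ bookkeeping) and sets $z=-1$ to get $\sum_{n\ge 0} q^{n(n+1)}/(q^2;q^2)_n$, then compares with \eqref{o1b} and \eqref{o3b}. Your signed-count computation of $\sum_n (M^e_{\le 0}(n)-M^o_{\le 0}(n))q^n$ is the same calculation, just organized as a sum/difference system rather than a $z=\pm 1$ specialization (and the identity $\sum_n (m_{1,4}(n)-m_{3,4}(n))q^n=(-q^2;q^2)_\infty$ you cite actually appears in the proof of the proposition giving \eqref{o1b}--\eqref{o3b}, not of Theorem~\ref{4ways}).
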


\begin{proof}
Note that the generating function for the number $M_{\le 0} (k, n)$ of partitions $\lambda$ of $n$ into $k$ parts with $\crank(\lambda) \le 0$ is
\begin{equation*}
\sum_{k, n\ge 0} M_{\le 0} (k,n) z^k q^n=\sum_{n\ge 0}  \frac{ z^{2n} q^{n(n+1)} }{(zq;q)_n (q;q)_n}.
\end{equation*}
Substituting $z=-1$ gives
\begin{align*}
\sum_{ n\ge 0} \left(M^{e}_{\le 0} (n)- M^{o}_{\le 0}(n)\right) q^n= \sum_{n \ge 0} \frac{ q^{n(n+1)} }{(-q;q)_n (q;q)_n} =\sum_{n\ge 0} \frac{q^{n(n+1)}}{(q^2;q^2)_n}.
\end{align*}
Thus, the generating function for the number of partitions $\lambda$ of $n$ into an even number of parts with $\crank(\lambda)\le 0$ is
\begin{equation*}
\frac{1}{2}\left(  \sum_{n \ge 0}  \frac{ q^{n(n+1)} }{(q;q)_n^2} + \sum_{n\ge 0} \frac{q^{n(n+1)}}{(q^2;q^2)_n}\right). 
\end{equation*}
The theorem then follows easily with \eqref{o1b} and \eqref{o3b}.
\end{proof}

The interested reader will want to compare our proof with that of \cite{hk}.

Theorem~\ref{newo1o3} and Proposition~\ref{o13} immediately give the following corollary.  We provide a combinatorial verification, similar to the second cancellation of the combinatorial proof of Lemma~\ref{lem2.2}.

\begin{corollary}
For any $n\ge 1$,
\[M^e_{\le 0}(n) = \begin{cases} M^o_{\le 0}(n) & \text{if $n$ is odd,} \\ M^o_{\le 0}(n) + q(n/2) & \text{if $n$ is even.} \end{cases} \]
\end{corollary}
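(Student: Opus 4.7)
The plan is to give a sign-reversing involution on partitions $\lambda$ of $n$ with $\crank(\lambda) \le 0$ (using sign $(-1)^{\ell(\lambda)}$), paralleling the second cancellation in the combinatorial proof of Lemma~\ref{lem2.2}. From the proof of Theorem~\ref{newo1o3},
\[
\sum_{n\ge 0}\bigl(M^e_{\le 0}(n) - M^o_{\le 0}(n)\bigr)\,q^n = \sum_{d\ge 0}\frac{q^{d(d+1)}}{(q^2;q^2)_d} = (-q^2;q^2)_\infty,
\]
where the second equality is Euler's identity $\sum_k q^{k(k+1)/2}/(q;q)_k = (-q;q)_\infty$ with $q\mapsto q^2$, which is the generating function for partitions of $n$ into distinct even parts; halving each part identifies these with partitions of $n/2$ into distinct parts, contributing $q(n/2)$ when $n$ is even and $0$ otherwise. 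So it suffices to exhibit an involution whose fixed points are indexed by such partitions.

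Following the combinatorial proof of Theorem~\ref{thm2.1} with $j=0$, any $\lambda$ with $\crank(\lambda)\le 0$ satisfies $\omega(\lambda) > 0$ and has Durfee square of some size $d\ge 1$ with $\omega(\lambda) \ge d$. I would split $\lambda$ into a triple $(d,\alpha,\gamma)$: the Durfee square itself, a partition $\alpha$ with at most $d$ parts recording the excesses $\lambda_i - d$ for $1 \le i \le d$, a forced block $1^d$ just below the Durfee square, and the remaining partition $\gamma$ with parts $\le d$. Since $\ell(\lambda) = 2d + \ell(\gamma)$, the relevant sign is just $(-1)^{\ell(\gamma)}$, and $\alpha$ contributes nothing to the sign.

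Conjugating $\alpha$ to $\alpha'$ (parts $\le d$), the data becomes a pair $(\alpha',\gamma)$ of partitions with parts in $\{1,\ldots,d\}$, signed by $(-1)^{\ell(\gamma)}$. For each $k$ let $a_k, b_k$ denote the multiplicities of $k$ in $\alpha', \gamma$. The involution I would use locates the smallest $k$ such that $a_k > 0$ or $b_k$ is odd, and then moves one $k$-part from $\gamma$ to $\alpha'$ if $b_k$ is odd, or otherwise moves one $k$-part from $\alpha'$ to $\gamma$. This preserves total weight and flips the parity of $\ell(\gamma)$; a short case check, analogous to the smallest-part cancellation in the second part of the proof of Lemma~\ref{lem2.2}, shows it is self-inverse. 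The fixed points are exactly the pairs with $\alpha = \emptyset$ and every part of $\gamma$ occurring with even multiplicity.

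To finish, I would write such a $\gamma$ as $\sigma \cup \sigma$ for an arbitrary partition $\sigma$ with parts $\le d$, so each fixed point is coded by a pair $(d,\sigma)$ of total weight $d(d+1) + 2|\sigma| = n$. Padding the conjugate $\sigma'$ with zeros to length $d$ and setting $\nu_i = \sigma'_i + (d - i + 1)$ for $1 \le i \le d$ produces a partition with $d$ distinct positive parts of weight $n/2$; doubling each part gives a partition of $n$ into $d$ distinct even parts, and summing over $d$ recovers all $q(n/2)$ partitions of $n/2$ into distinct parts. The main obstacle is confirming the involution carefully---in particular, that the per-$k$ move does not interfere with smaller values of $k$ and really is self-inverse; after that, the Durfee-type bijection on fixed points is routine.
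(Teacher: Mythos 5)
Your proposal is correct and is essentially the paper's own argument: the same decomposition of a crank $\le 0$ partition into its Durfee square, $d$ forced parts $1$, the remaining parts below (your $\gamma$, the paper's $\pi$) and the conjugate of the parts to the right (your $\alpha'$, the paper's $\nu$), with the identical sign-reversing involution (your multiplicity-based rule coincides with the paper's ``smallest odd-multiplicity part of $\pi$ versus smallest part of $\nu$'' rule) and the same fixed-point count. The only cosmetic difference is that you encode the fixed points directly as partitions of $n/2$ into distinct parts, whereas the paper rearranges them into partitions of $n$ into distinct even parts.
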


\begin{proof}
We construct a sign reversing involution on $M_{\le 0}(n)$

Suppose $\lambda \in M_{\le 0}(n)$ has Durfee square size $d\times d$. Since $\crank(\lambda)\le 0$, there are at least $d$ parts $1$.  Let $\pi$ be the partition consisting of parts below the Durfee square excluding $d$ parts $1$. Let $\nu$ be the conjugate of the partition consisting of parts to the right of the Durfee square. 

Let $x$ be the smallest part of $\pi$ appearing an odd number of times and $y$ be the smallest part of $\nu$. If  $x\le y$, then move a part of size $x$ to $\nu$.  If $x>y$, then we move a part of size $y$ to $\pi$. This decreases or increases $\ell(\pi)$ by one, so it indeed decreases or increases $\ell(\lambda)$ by one. Hence, it is a sign reversing involution. 

Partitions $\lambda$ with each part in $\pi$ appearing an even number of times and $\nu$ being the empty partition remain unchanged by the involution.  Note that this occurs only when $n = |\lambda|$ is even.  Thus 
\begin{align*}
\sum_{n\ge 0} \left(M^{e}_{\le 0} (n)- M^{o}_{\le 0}(n)\right) q^n &=\sum_{\lambda} (-1)^{\ell(\lambda)} q^{|\lambda|} \\
&= \sum_{d\ge 0} \sum_{\pi, \nu} (-1)^{\ell(\pi)} q^{d^2+d+|\pi | +|\nu|} \\
&=\sum_{d\ge 0}\frac{q^{d^2+d}}{(q^2;q^2)_d}
\end{align*}
where the second equality follows from the decomposition of $\lambda$ with Durfee square size $d\times d$, $d$ parts $1$, $\pi$ and $\nu$, and the last equality follows from the sign involution. 

Finally, 
\[\sum_{d\ge 0}\frac{q^{d^2+d}}{(q^2;q^2)_d} = (-q^2; q^2)_\infty\]
which has the following combinatorial proof: Rearrange the $d^2$ boxes of the Durfee square and $d$ parts $1$ in rows of length $2d, 2d-2, \ldots, 2$, then add the conjugate of $\pi$ to these $d$ consecutive even parts.  This produces a partition of $n$ into distinct even parts, and there are $q(n/2)$ such partitions.
\end{proof}

We hope that this proof contributes to combinatorial verifications of Theorem~\ref{cm} and its refinement, Theorem~\ref{newo1o3}, which have eluded us so far.

\section{Frobenius symbols and crank} \label{Frob}
In this final section, we revisit a theme begun by Andrews in 2011 \cite{a11}, the relationships between partitions whose Frobenius symbols satisfy certain restrictions and partitions with certain crank or mex characteristics.  In keeping with the theme of this paper, we provide combinatorial proofs for two results of Hopkins, Sellers, and Stanton.  The first is \cite[Proposition 7]{hss}.  

\begin{proposition}[Hopkins, Sellers, Stanton] \label{Frob1}
The number of partitions of $n$ with crank $0$ equals the number of partitions of $n$ whose Frobenius symbol has no $0$ minus the number of partitions of $n-1$ whose Frobenius symbol has no $0$.
\end{proposition}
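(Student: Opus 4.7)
The plan is to construct an explicit bijection from $F_0(n)$, the set of partitions of $n$ whose Frobenius symbol contains no $0$, to the disjoint union of crank-$0$ partitions of $n$ and elements of $F_0(n-1)$, thereby proving the identity combinatorially. First I would record the structural characterization that, for a nonempty partition $\lambda$ with Durfee square of size $d \times d$, the Frobenius symbol of $\lambda$ has no $0$ if and only if $\lambda_d \ge d+1$ and $\lambda_{d+1} = d$. This follows because the rightmost top and bottom entries of the Frobenius symbol are $\lambda_d - d$ and $\lambda'_d - d$, and the latter is positive precisely when $\lambda_{d+1} \ge d$, which together with the Durfee constraint $\lambda_{d+1} \le d$ forces equality.

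I would then split $F_0(n)$ into three types. Type (I): $\lambda$ has no part equal to $1$, which forces $d \ge 2$ since $\lambda_{d+1} = d$. Send $\lambda$ to the partition obtained by deleting $\lambda_{d+1} = d$ and adjoining $d$ copies of $1$. The image has $\omega = d$ and exactly $d$ parts exceeding $d$ (namely $\lambda_1, \ldots, \lambda_d$, since $\lambda_{d+2}, \ldots \le d$), so it has crank $0$. Type (II): $\lambda = (\lambda_1, 1)$ with $\lambda_1 \ge 2$; send $\lambda$ to itself, which clearly has crank $0$. Type (III): everything else; send $\lambda$ to $\lambda \setminus \{1\}$. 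The inverse map sends $(n-1, 1)$ back to itself, sends a crank-$0$ partition with $\omega = w \ge 2$ to the result of removing its $w$ ones and inserting a single new part equal to $w$, and sends $\nu \in F_0(n-1)$ to $\nu \cup \{1\}$.

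The main obstacle is verifying that type (III) really lands in $F_0(n-1)$. Removing a single $1$ leaves $\lambda_d$ unchanged, so $\lambda_d \ge d+1$ persists; it also preserves $\lambda_{d+1} = d$ whenever $d \ge 2$ (because then that part is not a $1$) or $d = 1$ with $\omega(\lambda) \ge 2$ (the removed $1$ is not the only one). The borderline case $d = 1$ with $\omega(\lambda) = 1$, where $\lambda_{d+1} = 1$ would be destroyed, is precisely type (II), explaining why it must be singled out. A secondary verification confirms that the three maps assemble to a bijection onto $M(0,n) \sqcup F_0(n-1)$, using that a crank-$0$ partition with $\omega = 1$ must be $(n-1, 1)$ and one with $\omega = w \ge 2$ consists of $w$ parts greater than $w$, some middle parts in $\{2, \ldots, w\}$, and $w$ ones. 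The tiny cases $n \in \{0, 1\}$ are checked directly, with $n = 1$ relying on Garvan's convention $M(0, 1) = -1$ to match $F_0(1) - F_0(0) = -1$.
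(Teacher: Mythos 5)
Your proposal is correct and is essentially the paper's argument repackaged: the same structural characterizations (crank-$0$ partitions via the Durfee square and $\omega(\lambda)=d$; no-$0$ Frobenius symbols via $\lambda_d\ge d+1$, $\lambda_{d+1}=d$) and the same two moves (trading the row of length $d$ below the Durfee square for $d$ parts equal to $1$, and deleting/adding a single part $1$ to produce the $n-1$ term), merely assembled as one bijection onto a disjoint union instead of the paper's bijection-plus-subtraction count. Your explicit isolation of the $d=1$ case $(\lambda_1,1)$ is a slightly more careful treatment of a boundary case the paper passes over quickly.
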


\begin{proof}
Let $\lambda$ be a partition with $\crank(\lambda)=0$. If $\omega(\lambda)=0$, then $\crank(\lambda)>0$, which is a contradiction. So assume $\omega(\lambda)>0$ and let the size of the Durfee square of $\lambda$ be $d\times d$. If $d<\omega(\lambda)$, then $\mu(\lambda)\le d$ since $\lambda_{d+1}\le d <\omega(\lambda)$, so $\crank(\lambda)< 0$. Similarly, we can check that if $d>\omega(\lambda)$, then $\crank(\lambda)>0$. Hence, if $\crank(\lambda)=0$, then its Durefee square must be of size $d\times d$ with $d=\omega(\lambda)$. Also, the first $d$ parts of $\lambda$ must be greater than $d$ since, if $\lambda_{d}\le d$, then 
$\mu(\lambda)\le d-1$, so 
\[ \crank(\lambda)=\mu(\lambda)-\omega(\lambda)\le d-1-d\le -1. \]
Thus, the generating function for partitions with crank 0 is
\[ 1+\sum_{d=1}^{\infty} \frac{q^{d^2+2d}}{(q;q)_d (q^2;q)_{d-1}}. \]
A Ferrers diagram of this type is shown in the left-hand side of Figure \ref{fig1}.  To produce the partition on the right-hand side, delete the $d$ parts  $1$ and create a row of size $d$ just below the Durfee square.

\begin{figure}[b]
\begin{tikzpicture}[scale=0.8]
\draw[thick, black] (0, 1) -- (4, 1);
\draw[thick, black] (0, -0.5) -- (2, -0.5);
\draw[thick, black] (0, 1) -- (0, -4);
\draw[thick, black] (0, -4) -- (0.5, -4);
\draw[thick, black] (0.5, -4) -- (0.5, -2.5);
\draw[thick, black] (2, 1) -- (2, -0.5);
\draw[thick, black] (1.5, 1) -- (1.5, -0.5);
\draw[thick, black] (0, -2.5) -- (1, -2.5);
\draw[thick, dotted, black] (1,-2.5) -- (1.5, -0.5);
\draw[thick, dotted, black] (2,-0.5) -- (3, -0.25);

\draw (-0.3, 0.2) node{\footnotesize{$d$}};
\draw (-0.3, -3.3) node{\footnotesize{$d$}};
\draw (0.8, 1.3) node{\footnotesize{$d$}};

\draw (5,-0.5) node{$\longrightarrow$};

\draw[thick, black] (7, 1) -- (11, 1);
\draw[thick, black] (7, -0.5) -- (9, -0.5);
\draw[thick, black] (7, 1) -- (7, -3);
\draw[thick, black] (7, -1) -- (8.5, -1);
\draw[thick, black] (8.5, -0.5) -- (8.5, -1);
\draw[thick, black] (9, 1) -- (9, -0.5);
\draw[thick, black] (8.5, 1) -- (8.5, -0.5);
\draw[thick, black] (7, -3) -- (8, -3);
\draw[thick, dotted, black] (8,-3) -- (8.5, -1);
\draw[thick, dotted, black] (9,-0.5) -- (10, -0.25);

\draw (-0.3, 0.2) node{\footnotesize{$d$}};
\draw (-0.3, -3.3) node{\footnotesize{$d$}};
\draw (0.8, 1.3) node{\footnotesize{$d$}};

\end{tikzpicture}

\caption{Mapping a crank 0 partition to a partition with no 0 in its Frobenius symbol.  }
\label{fig1}
\end{figure}
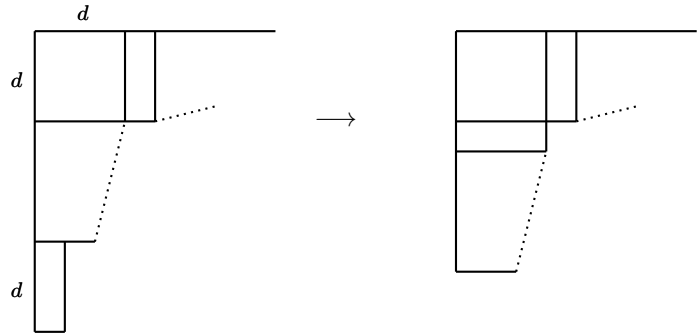

Now consider partitions whose Ferrers diagrams are of the type shown in the right-hand side of Figure \ref{fig1} with parts $1$ allowed and let $a(n)$ be the number of such partitions of $n$.  These partitions can be divided into two groups: partitions with and without parts $1$. If such a partition of $n$ has at least one part $1$, then deleting one part $1$ gives a partition of $n-1$. Thus, the number of partitions of $n$ of the type shown in the right-hand-side with no parts $1$ equals $a(n)-a(n-1)$. 

Also, because of the length $d$ row below the Durfee square and the height $d$ column to the right of the Durfee square, the Frobenius symbol for this right-hand partition has no 0 entries.  Thus, the number of partitions of $n$ in question is the number of partitions of $n$ whose Frobenius symbol has no $0$ minus the number of partitions of $n-1$ whose Frobenius symbol has no $0$. 
\end{proof}

Examining the Ferrers diagram on the right-hand side of Figure \ref{fig1} shows that Proposition~\ref{Frob1} is equivalent to the following corollary.

\begin{corollary}
The number of partitions of $n$ with crank $0$ equals the number of partitions of $n$ whose Frobenius symbol has no $0$ and the first two entries of the bottom row differ by $1$.
\end{corollary}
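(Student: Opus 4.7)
The plan is to translate the right-hand Ferrers diagram of Figure~\ref{fig1} directly into Frobenius-symbol language.  By the proof of Proposition~\ref{Frob1}, the partitions of $n$ with crank $0$ are in bijection with the partitions $\lambda$ of $n$ whose Ferrers diagram has the right-hand shape: a $d\times d$ Durfee square (for some $d\ge 1$), the column of height $d$ just to its right (so $\lambda_d\ge d+1$), the row of length exactly $d$ just below it (so $\lambda_{d+1}=d$), and every other part in $\{2,\dots,d\}$.  It therefore suffices to recognize those three geometric conditions as the two Frobenius conditions in the corollary.

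For the ``Frobenius has no $0$'' condition I would invoke strict monotonicity of both rows.  The top row $(\lambda_i-i)_{i=1}^d$ avoids $0$ iff $\lambda_d\ge d+1$, and the bottom row $(\lambda'_i-i)_{i=1}^d$ avoids $0$ iff $\lambda'_d\ge d+1$.  Since the Durfee has size $d$, and so $\lambda'_{d+1}\le d$, the latter inequality is equivalent to $\lambda_{d+1}=d$.  Combining, the two geometric conditions $\lambda_d\ge d+1$ and $\lambda_{d+1}=d$ are precisely the no-$0$ Frobenius condition.

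For ``the first two bottom entries differ by $1$'' I would compute the difference $(\lambda'_1-1)-(\lambda'_2-2)=\lambda'_1-\lambda'_2+1$, observing that $\lambda'_1-\lambda'_2$ counts the parts of $\lambda$ equal to $1$.  For $d\ge 2$ the right-hand shape has every part at least $2$, so this count is $0$ and the two entries differ by exactly $1$; conversely, Frobenius no $0$ together with difference $1$ forces $\lambda_d\ge d+1$, $\lambda_{d+1}=d$, and no part equal to $1$, recovering the right-hand shape.  Combined with Proposition~\ref{Frob1}, this gives the corollary.  The $d=1$ boundary case, where the Frobenius symbol has only a single column, is a minor bookkeeping item: the image partitions $(k,1)$ with $k\ge 2$ should be interpreted as satisfying the corollary's condition in the spirit of the figure, which I do not expect to cause any real obstacle beyond fixing the convention.
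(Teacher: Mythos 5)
Your translation of the right-hand diagram of Figure~\ref{fig1} into Frobenius language is exactly the paper's (unwritten) argument, and it is correct whenever the Durfee size is $d\ge 2$: absence of $0$ in the symbol is equivalent to $\lambda_d\ge d+1$ together with $\lambda_{d+1}=d$, and since the number of parts equal to $1$ is $\lambda'_1-\lambda'_2=b_1-b_2-1$, the condition that the first two bottom entries differ by $1$ is exactly the absence of parts equal to $1$, which is what the image of the crank-$0$ bijection demands.

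The single-column case, however, is not the harmless convention-fixing you suggest, and the fix you sketch does not work as stated. Declaring that the image partitions $(k,1)$ ``satisfy the condition in the spirit of the figure'' is insufficient because \emph{every} partition $(k,1^j)$ with $k\ge 2$, $j\ge 1$ has a one-column Frobenius symbol $\binom{k-1}{j}$ with no $0$; these are precisely the no-$0$ partitions of Durfee size $1$, and only the one with $j=1$ corresponds to a crank-$0$ partition. So if one-column symbols are deemed to satisfy the second condition vacuously, the right-hand count is too large (for $n=7$ the no-$0$ partitions are $(6,1),(5,1,1),(4,1^3),(3,1^4),(2,1^5)$, all one-column, while the only crank-$0$ partition is $(6,1)$), and if they are deemed to fail it, the count is too small ($0$ versus $1$ for $n=7$). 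The convention that makes the statement true is forced by your own formula $b_1-b_2=(\text{number of }1\text{'s})+1$: read the missing second bottom entry as $0$, i.e.\ for a one-column symbol require the bottom entry to equal $1$, which singles out $\lambda=(k,1)$ and nothing else. With that convention made explicit your proof is complete and coincides with the paper's; without it, the boundary case is a genuine (if small) hole rather than bookkeeping.
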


Our last proof gives a combinatorial argument for \cite[Theorem 8]{hss}.  We return to $j$-Durfee rectangles and the foundational Theorem~\ref{thm2.1}.

\begin{theorem}[Hopkins, Sellers, Stanton]
The number of partitions $\lambda$ of $n$ with $\crank(\lambda) \ge j$ equals the number of partitions of $n-j$ whose Frobenius symbol has no $j$ in its top row.
\end{theorem}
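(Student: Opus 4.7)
The plan is to use Theorem~\ref{thm2.1} together with the symmetry \eqref{cranksymmetry} and construct a weight-shifting bijection grounded in the $j$-Durfee rectangle decomposition from the combinatorial proof of Theorem~\ref{thm2.1}. By \eqref{cranksymmetry}, it is enough to biject partitions $\lambda$ of $n$ with $\crank(\lambda)\le -j$ to partitions $\mu$ of $n-j$ whose Frobenius symbol has no $j$ in the top row. That combinatorial proof records each such $\lambda$ via its $j$-Durfee rectangle of some size $d \times (d+j)$, a partition of at most $d$ parts of arbitrary size to the right of the rectangle, and a partition below of parts $\le d+j$ containing at least $d+j$ ones.

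My proposed bijection sends $\lambda$ to $\mu$ by deleting $d+j$ parts equal to $1$ and adding $1$ to each of the first $d$ parts, widening the $j$-Durfee rectangle to a $d \times (d+j+1)$ block. The weight change is $d-(d+j) = -j$, so $|\mu| = n-j$. To verify that $\mu$ has no $j$ in its Frobenius top row, I would compute the arms $\mu_i - i$: for $i \le d$ we have $\mu_i \ge d+j+1 \ge i+j+1$, forcing the arm to be $\ge j+1$; while for $i \ge d+1$ we have $\mu_i = \lambda_i \le \lambda_{d+1} \le d+j < i+j$, forcing the arm to be $\le j-1$. Hence no arm equals $j$.

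For the inverse, given $\mu$ of $n-j$ with Frobenius top row avoiding $j$, let $d$ be the number of arms greater than $j$; since the arms strictly decrease, this $d$ is well-defined and its defining inequalities force $\mu_d \ge d+j+1$ and $\mu_{d+1} \le d+j$ (with $\mu_{d+1}=0$ if undefined). Subtracting $1$ from each of the first $d$ parts and appending $d+j$ ones then returns a partition $\lambda$ of $n$ with $j$-Durfee rectangle exactly $d \times (d+j)$ and $\omega(\lambda)\ge d+j$, so $\crank(\lambda)\le -j$ by the analysis of Theorem~\ref{thm2.1}.

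The step requiring most care will be checking well-definedness at the boundary between the first $d$ parts and the rest: one must verify that $\mu_d - 1 \ge \mu_{d+1}$ (immediate from $\mu_d \ge d+j+1 > d+j \ge \mu_{d+1}$) and that the reconstructed $j$-Durfee rectangle is exactly $d \times (d+j)$ and not larger. As a generating function sanity check, summing over $d$ on the Frobenius side yields $\sum_{d\ge 0} q^{d(d+j+1)}/((q;q)_d(q;q)_{d+j})$, which after multiplication by $q^j$ returns $\sum_{d\ge 0} q^{(d+1)(d+j)}/((q;q)_d(q;q)_{d+j})$, matching the expression supplied by Theorem~\ref{thm2.1}.
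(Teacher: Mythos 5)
Your proposal is correct and follows essentially the same route as the paper: invoke Theorem~\ref{thm2.1} (with the crank symmetry) to describe crank-bounded partitions via the $j$-Durfee rectangle with at least $d+j$ parts equal to $1$, then delete those $d+j$ ones and add $1$ to each of the $d$ largest parts, noting $\lambda'_d \ge d+j+1$ and $\lambda'_{d+1} \le d+j$ forces the Frobenius top row to skip $j$. Your extra details (the explicit arm inequalities, the inverse map, and the generating-function check) are sound and simply make explicit what the paper leaves implicit.
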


\begin{proof}
By Theorem~\ref{thm2.1}, we know that partitions $\lambda$ of $n$ with $\crank(\lambda) \ge j$ are in bijection with partitions of $n$ with at least $d+j$ parts $1$ where $d$ is the parameter of its $j$-Durfee rectangle. 
Modify $\lambda$ by deleting $d+j$ parts $1$ and increasing each of the $d$ largest parts of $\lambda$ by $1$. The resulting $\lambda'$ is therefore a partition of $n-j$ which satisfies
$$
\lambda'_d\ge d+j+1, \quad \lambda'_{d+1}\le d+j.
$$
Thus the top entries in columns $d$ and $d+1$ of the Frobenius symbol of $\lambda'$ are at least $j+1$ and at most $j-1$, respectively. This shows that there are no occurences of $j$ in the top row of the Frobenius symbol.  See the Figure \ref{fig2}.
\end{proof}

\begin{figure}[ht]
\begin{tikzpicture}[scale=0.8]
\draw[thick, black] (0, 1) -- (4, 1);
\draw[thick, black] (0, 0) -- (2.5, 0);
\draw[thick, black] (0, 1) -- (0, -1.5);
\draw[black] (2, 1) -- (2, 0);
\draw[black] (2.5, 1) -- (2.5, 0);
\draw[black] (2, 0) -- (2, -0.5);

\draw[black] (0,-1.5) -- (0.5, -1.5);
\draw[black] (0.5,-1.5) -- (0.5, -1.0);
\draw[black] (0.5,-1.0) -- (1.5, -1.0);
\draw[black] (1.5,-1.0) -- (1.5, -0.5);
\draw[black] (1.5,-0.5) -- (2, -0.5);

\draw (-0.3, 0.5) node{\footnotesize{$d$}};
\draw (0.5, -0.3) node{\footnotesize{$d$}};
\draw (1.1, 1.3) node{\footnotesize{$d+j$}};

\draw[dotted, black] (0,1) -- (2,-1);
\end{tikzpicture}
\caption{The result of deleting $d+j$ parts 1 and adding 1 to each of the $d$ largest parts.}
\label{fig2}
\end{figure}
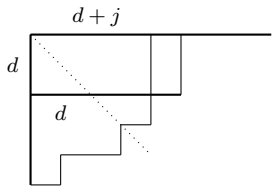

\end{document}